\newcommand*{\mailto}[1]{\href{mailto:#1}{\nolinkurl{#1}}}
\newtheorem{theorem}{Theorem}[section]
\newtheorem{lemma}[theorem]{Lemma}
\newtheorem{corollary}[theorem]{Corollary}
\newtheorem{proposition}[theorem]{Proposition}
\theoremstyle{definition}
\newtheorem{remark}[theorem]{Remark}
\newcommand{\A}{\mathcal{A}}
\newcommand{\cE}{\mathcal{E}}
\newcommand{\E}{\mathrm{e}}
\newcommand{\I}{\mathrm{i}}
\newcommand{\re}{\mathrm{Re}}
\newcommand{\spr}[2]{\langle #1 , #2 \rangle}
\newcommand{\norm}[1]{\| #1 \|}
\newcommand{\hnorm}[1]{\| #1 \|_{\mathcal{H}}}
\newcommand{\sig}{\sigma}
\newcommand{\lam}{\lambda}
\newcommand{\om}{\omega}
\newcommand{\Om}{\Omega}
\newcommand{\R}{{\mathbb R}}
\newcommand{\N}{{\mathbb N}}
\newcommand{\h}{\mathcal{H}}
\newcommand{\D}{\mathcal{D}}
\newcommand{\DA}{\mathcal{D}(\mathcal{A})}
\newcommand{\hio}{{H_0^1(\Omega)}}
\newcommand{\hii}{{H^2(\Omega)}}
\DeclareMathOperator*{\esssup}{\mathrm{ess}\,\mathrm{sup}}
\numberwithin{equation}{section}
\begin{document}

\title[Well-posedness and asymptotic behavior]{Well-posedness and exponential decay of solutions for the 
Blackstock--Crighton--Kuznetsov equation}

\author[R.\ Brunnhuber]{Rainer Brunnhuber}
\address{Institut f\"ur Mathematik\\ Universit\"at Klagenfurt\\
Universit\"atsstrasse 65-57\\ 9020 Klagenfurt am W\"orthersee\\ Austria}
\email{\mailto{rainer.brunnhuber@aau.at}}
\urladdr{\url{http://www.aau.at/~rabrunnh/}}

\keywords{Nonlinear acoustics, higher-order nonlinear wave equation, well-posedness, exponential decay}
\thanks{Research supported by the Austrian Science Fund (FWF): P24970}
\subjclass[2010]{Primary: 35L75, 35Q35; Secondary: 35B40, 35B65.}

\begin{abstract} 

The present work provides well-posedness and exponential decay results for the Blackstock--Crighton--Kuznetsov equation 
arising in the modeling of nonlinear acoustic wave propagation in thermally relaxing viscous fluids.

First, we treat the associated linear equation by means of operator semigroups. Moreover, we derive energy estimates
which we will use in a fixed-point argument in order to obtain well-posedness of the Blackstock--Crighton--Kuznetsov equation.
Using a classical barrier argument we prove exponential decay of solutions. 
\end{abstract}

\maketitle


\section{Introduction}
The present work aims to enhance the mathematical understanding of nonlinear acoustic wave propagation in viscous, thermally 
conducting, inert fluids. In particular, our motivation is to deal with higher order models arising in nonlinear acoustics.
An acoustic wave propagates through a medium as a local pressure change. Nonlinear phenomena typically 
occur at high acoustic pressures which are used for several medical and industrial purposes such as lithotripsy, 
thermotherapy, ultrasound cleaning and sonochemistry. Due to this broad range of applications, nonlinear acoustics
is currently an active field of research, see \cite{BrKa14}, \cite{BKR14}, \cite{HaBl98}, \cite{Jor04}, \cite{KaLa09}, 
\cite{KaLa11}, \cite{KaLa12}, \cite{KLM12}, \cite{KLP12}, \cite{KNT14}, \cite{Kal04}, \cite{Kuz71}, \cite{Roz08}, 
\cite{Tjo01}, \cite{Wes63} and the references therein.

The classical models in nonlinear acoustics are partial differential equations of second order in time which are 
characterized by the presence of a viscoelastic damping. The most general of these conventional models is Kuznetsov's equation
\begin{equation}
\label{kuznetsov}
u_{tt}-c^2 \Delta u - b\Delta u_t =\big(\tfrac{1}{c^2} \tfrac{B}{2A} (u_t)^2+|\nabla u|^2\big)_t,
\end{equation}
where $u$ denotes the acoustic velocity potential, $c>0$ is the speed of sound, $b\geq 0$ is the diffusivity 
of sound and $B/A$ is the parameter of nonlinearity. Neglecting local nonlinear effects 
(in the sense that the expression $c^2\vert\nabla u\vert^2-(u_t)^2$ is sufficiently small) one arrives at the Westervelt equation
\begin{equation}
\label{westervelt}
u_{tt} - b\Delta u_t -c^2\Delta u= \big(\tfrac{1}{c^2}\big(1+\tfrac{B}{2A}\big)(u_t)^2\big)_t.
\end{equation}
Both, the Kuznetsov and the Westervelt equation, can alternatively be formulated in terms of the acoustic 
pressure $p$ via the relation $\rho u_t = p$, where $\rho$ denotes the mass density. 
The quantities $A$ and $B$ occurring in the parameter of nonlinearity are the coefficients of the first and second 
order terms in the Taylor series expansion of the variation of pressure in the medium in terms of variation of the density. 
For a detailed introduction to the field of nonlinear acoustics we refer to \cite{HaBl98}.

The Kuznetsov equation in its turn can be regarded as a simplification (for a small thermal conductivity $a= \nu \mbox{Pr}^{-1}$, where
$\nu$ is the kinematic viscosity and $\mbox{Pr}$ is the Prandtl number) of the higher 
order model 
\begin{equation} 
\label{BCK}
(a\Delta - \partial_t)(u_{tt}-c^2\Delta u - b\Delta u_t) = \big(\tfrac{1}{c^2} \tfrac{B}{2A}(u_t)^2 + \left|\nabla u \right|^2 \big)_{tt}
\end{equation}
which we call Blackstock--Crighton--Kuznetsov equation. Neglecting local nonlinear effects as it is done when reducing the
Kuznetsov equation to the Westervelt equation we arrive at the Blackstock--Crighton--Westervelt equation
\begin{equation} 
\label{BCW}
(a\Delta - \partial_t)(u_{tt}-c^2\Delta u - b\Delta u_t) = \big(\tfrac{1}{c^2} \big(1+\tfrac{B}{2A}\big)(u_t)^2)_{tt}.
\end{equation}
For more information on the derivation of \eqref{BCK} and \eqref{BCW} we refer to Section \ref{sec:derivation}.

The Westervelt and the Kuznetsov equation as well as the Khoklov-Zabolotskaya-Kuznetsov equation, which is another 
standard model in nonlinear acoustics, have recently been quite extensively investigated (see, e.g., \cite{BKR14}, \cite{Jor04},
\cite{KaLa09}, \cite{KaLa12}, \cite{KaLa11}, \cite{KLM12}, \cite{KNT14} and \cite{Roz08}). 
Research on higher order models such as \eqref{BCK} and \eqref{BCW} is still in an early stage. 
The starting point was \cite{BrKa14} where well-posedness and exponential decay of solutions for \eqref{BCW} 
together with homogeneous Dirichlet boundary conditions was shown. The goal of the present paper is to provide 
results on well-posedness and exponential decay for the more general Blackstock-Crighton-Kuznetsov equation 
\eqref{BCK} which is one more step towards closing the gap of missing results on higher order models in nonlinear acoustics.

More precisely, the present work is devoted to the homogeneous Dirichlet boundary value problem 
\begin{equation}
\label{IBVP:Dir:hom}
\begin{cases}
\begin{aligned}
(a\Delta - \partial_t)(u_{tt}-b\Delta u_t - c^2 \Delta u)&= (k (u_t)^2+s|\nabla u|^2)_{tt}		&& \text{in } (0,T) \times \Omega,\\
(u,u_t,u_{tt})&=(u_0, u_1, u_2)												&& \text{on } \{t=0\}\times\Omega,\\
(u, \Delta u)&= (0,0)														&& \text{on }[0,T) \times \Gamma,
\end{aligned}
\end{cases}
\end{equation}
on a bounded domain $\Omega \subset \R^n$ of dimension $n\in \{1,2,3\}$ with smooth boundary $\Gamma=\partial\Omega$, 
where $T>0$ is either finite or infinite. The initial values $u_0, u_1, u_2\colon \Omega \rightarrow \R$ 
are given and $u\colon [0,T) \times \Omega \rightarrow \R$ is the unknown. Moreover, we assume that $a,b,c,k>0$ are constants and $s \in \{0,1\}$. 
The case $s=1$ corresponds to \eqref{BCK} whereas $s=0$ relates to \eqref{BCW}.
The restriction on the dimension of the spatial domain $\Om$ is imposed in order to be able to use the embedding 
$H^2(\Om) \hookrightarrow L_\infty(\Om)$ which we will do at several crucial steps. We therefore point out that
our results do not hold for $n\geq 4$. However, this is not of relevance in practical applications anyway.

Here, beside the classical Dirichlet boundary condition $u |_\Gamma =0$ we impose $\Delta u |_\Gamma =0$, since
this ensures that both, $a\Delta u -u_t$ and $u_{tt}-b\Delta u_t - c^2 \Delta u$, have homogeneous Dirichlet 
boundary conditions such that the homogeneous Dirichlet Laplacian can be applied. In particular, $\Delta u|_\Gamma =0$ 
allows us to interchange the differential expressions on the left-hand side which we will do when deriving energy estimates. 

Rewriting the Blackstock--Crighton--Kuznetsov equation as
\begin{equation*}
(-1-2k u_t)u_{ttt} + (a+b) \Delta u_{tt} + c^2 \Delta u_t - ab \Delta^2 u_t - ac^2 \Delta^2 u = 2k (u_{tt})^2 + 2s (|\nabla u|^2)_{tt},
\end{equation*}
we see that \eqref{IBVP:Dir:hom} degenerates whenever $u_t$ equals $-(2k)^{-1}$.
Therefore, in the analysis of \eqref{IBVP:Dir:hom} it is important to avoid degeneracy which will
here be achieved by assuming smallness of the initial data and establishing a sufficiently small 
$L_\infty(0,T;H^3(\Om))$-bound on $u_t$.
The embedding $H^3(\Om) \hookrightarrow L_\infty(\Om)$ then allows to ensure $\norm{u_t}_{L_\infty((0,T)\times \Om)} < (2k)^{-1}$ which excludes degeneracy.

The paper is organized as follows. In Section \ref{sec:derivation} we sketch the derivation of the model under consideration.
In Section \ref{sec:linear} we study the linearized version of \eqref{IBVP:Dir:hom}. We use the theory of 
operator semigroups to prove existence and uniqueness of solutions as well as their long-time behavior. 
Moreover, we derive energy estimates which are the starting point for the proof of global well-posedness for the nonlinear model.
In Section \ref{sec:wp} we show well-posedness of \eqref{IBVP:Dir:hom} for sufficiently small initial data, i.e.\ we show
existence of a unique solution of \eqref{IBVP:Dir:hom} as well as its continuous dependence on 
the (small) initial conditions (Theorem \ref{thm:wp}). The key ingredient is Banach's Fixed-Point Theorem.  
The final part of Section \ref{sec:wp} is devoted to the proof of long-time behavior of solutions (Theorem \ref{thm:decay}). Based on an energy estimate for the nonlinear equation we use a classical barrier argument in order to show exponential decay of solutions of \eqref{IBVP:Dir:hom}.

The present work extends the results from \cite{BrKa14} in several ways. On one hand, it contains well-posedness for the more general
Blackstock--Crighton--Kuznetsov equation \eqref{BCK}. On the other hand, not only for \eqref{BCK}, but also for 
the Blackstock--Crighton-Westervelt equation \eqref{BCW} we obtain strong solutions such that the respective equation is satisfied in 
$L_2(0,T;L_2(\Om))$. 
The regularity of (weak) solutions in \cite{BrKa14} is not sufficient to control the additional term $(|\nabla u|^2)_{tt}$ in the nonlinearity appearing in
\eqref{BCK}. Here, we extend the semigroup approach used in \cite{BrKa14} in order to achieve higher regularity and modify the energy estimates 
from \cite{BrKa14} in an appropriate way. Moreover, while in \cite{BrKa14} local well-posedness was shown by a combination of regularity
results for the heat equation and the Westervelt equation, we here directly use energy estimates in a fixed-point argument which provides us 
at once with (global) well-posedness for \eqref{IBVP:Dir:hom}. 

Let us mention here that, as usual, by $L_\infty(\Om)$ we denote the space of (classes of) Lebesgue-measurable 
functions $\Om \rightarrow \R$ which are essentially bounded and $L_2(\Om)$ is the Hilbert space of (classes of) 
Lebesgue square integrable functions $\Om \rightarrow \R$ equipped with the inner product $\spr{u}{v}_{L_2}=\int_\Om uv$ 
and the induced norm $\norm{u}_{L_2}$. More generally, we will always write $\|.\|_X$ for the $X$-norm of an element in a 
Banach space $X$ and $C_{X,Y}$ for the embedding constant of the continuous embedding $X\hookrightarrow Y$ of $X$
into another Banach space $Y$.

We use the Sobolev space $W_p^s(\Om)$ of order $s\in \N$ and exponent $p \in [1,\infty]$. As usual, $H^s(\Om) = W_2^s(\Om)$.
The norm of a function $u \in H^s(\Om)$, $1 \leq s \leq 4$, with zero boundary conditions in the sense
that $ u |_\Gamma =0$ for $s \in \{1,2\}$ and $u |_\Gamma = \Delta u|_\Gamma =0$ for $s \in \{3,4\}$ 
is given by $\norm{u}_{H^s}= \norm{(-\Delta)^{s/2} u}_{L_2}$, where $-\Delta$ stands for the negative 
Laplacian on $L_2(\Om)$ with domain of definition $\D(\Delta)=\{ u \in H^2(\Om)\colon u|_\Gamma =0 \}$.  
Here, $u|_\Gamma$ denotes the Dirichlet trace of $u$. 

The space $C^k([0,T];X)$ consists of all $k$-times continuously differentiable functions $u: [0,T] \rightarrow X$ 
with $k\in \N_0$. We equip it with the norm $\norm{u}_{C^k([0,T];X)}= \sum_{i=0}^k \sup_{t \in [0,T]}\norm{\partial_t^i u(t)}_X$. Furthermore,
the Sobolev space $W_p^s(0,T;X)$ consists of all functions $u \in L_p(0,T;X)$ such that $\partial_t^k u$ exists in the weak sense and
belongs to $L_p(0,T;X)$ for all $1\leq k  \leq s$. On $W_p^s(0,T;X)$ we consider the norm $\|u\|_{W_p^s(0,T;X)} = ( \int_0^T \sum_{i=0}^s \| \partial_t^i u(t) \|_X^p \, dt)^{1/p}$ if $p \in [1,\infty)$ and $\|u\|_{W_{p}^{s}(0,T;X)}= \mathrm{ess\,sup}_{t\in [0,T]} \sum_{i=0}^s \norm{\partial_t^i u}_X$ if $p=\infty$.
For additional information on function spaces and embedding theorems, we refer to \cite{AdFo03} and \cite{Eva10}.

\section{On the derivation of the Blackstock--Crighton--Kuznetsov equation}
\label{sec:derivation}
Equation \eqref{BCK} is an approximate equation which is derived from the equations of motion for viscous, thermally conducting, inert fluids of arbitrary equation of state. 
It was derived by Blackstock in \cite{Bla63}. We here point out the crucial steps of this derivation.
Four equations are needed to describe the general motion of a viscous, heat-conducting fluid: mass conservation, momentum conservation, 
entropy balance and an equation of state. The equation 
describing mass conservation is given by
\begin{equation}
\label{continuity}
\tfrac{D\rho}{Dt} + \nabla \cdot (\rho \mathbf{v}) = 0,
\end{equation}
where $\rho$ is the mass density, $\mathbf{v}$ is the fluid velocity vector and $\tfrac{D}{Dt}= \partial_t + \mathbf{v} \cdot \nabla$ is the material derivative.
Momentum conservation is governed by 
\begin{equation}
\label{momentum}
\rho \tfrac{D \mathbf{v}}{D t} + \nabla p = \mu \Delta  \mathbf{v} + \left(\mu_B+ \tfrac{1}{3} \mu\right) \nabla (\nabla\cdot  \mathbf{v}).
\end{equation}
Here, $p$ denotes the thermodynamic pressure, $\mu$ is the shear viscosity and $\mu_B$ is the bulk viscosity. Moreover,
energy conservation can be expressed as
\begin{equation}\label{entropy1}
\rho \tfrac{DE}{Dt} = a \Delta T + \mu_B (\nabla \cdot \mathbf{v})^2 + \tfrac{1}{2} \mu \big(\tfrac{\partial  v_i}{\partial x_j} \tfrac{\partial  v_j}{\partial x_i} - \tfrac{2}{3} \delta_{ij} \tfrac{\partial v_k}{\partial x_k} \big)^2,
\end{equation}
where $E$ denotes the internal energy per unit mass and $a$ is the thermal conductivity. The final term is written in Cartesian tensor notation, 
that is, ${v}_i$ denotes the component of $\mathbf{v}$ in direction $x_i$ and $\delta_{ij}$ is the Kronecker delta.
The left-hand side of \eqref{entropy1} can alternatively be expressed as 
\begin{equation}
\rho  \, \tfrac{DE}{Dt} = \rho c_v \big(\tfrac{DT}{Dt} + \tfrac{\gamma - 1}{\beta}\, \nabla \cdot \mathbf{v} \big),
\end{equation}
where $c_v$ is the specific heat at constant volume and $\beta$ is the coefficient of thermal expansion. Equation \eqref{entropy1} then becomes
\begin{equation}
\label{entropy2}
 \rho c_v \big(\tfrac{DT}{Dt} + \tfrac{\gamma - 1}{\beta}\, \nabla \cdot \mathbf{v} \big) = a \Delta T + \mu_B (\nabla \cdot \mathbf{v})^2 + \tfrac{1}{2} \mu \big(\tfrac{\partial  v_i}{\partial x_j} \tfrac{\partial  v_j}{\partial x_i} - \tfrac{2}{3} \delta_{ij} \tfrac{\partial v_k}{\partial x_k} \big)^2.
\end{equation}
Furthermore, a thermodynamic equation of state must be added, for example
\begin{equation}
\label{state}
p = p(\rho, T).
\end{equation}
Blackstock approximated the above equations of motion for viscous, thermally conducting, inert fluids so as to account as simply as possible for effects of nonlinearity and dissipation, 
see \cite{Bla63}. He followed an approximation procedure first outlined by Lighthill. In this procedure terms in the equations are classified as follows. 
Linear terms not associated with any dissipative mechanism are regarded as first-order terms. 
Second-order terms are the quadratic nonlinear terms that do not involve viscosity or heat conduction as well as the linear viscosity and
heat conduction terms. All the remaining terms are regarded as being of higher order. The basic rule in the approximation procedure is
that only first-order and second-order terms may be retained. This implies that any factor in a second-order term may be replaced by its 
first-order equivalent because any more precise substitution would result in the appearance of higher-order terms. The expressions 'basic rule'
and 'substitution corollary' were used by Blackstock in \cite{Bla63}, not originally by Lighthill. 

Blackstock carried out the approximation first for perfect gases, in which case
\eqref{state} reads $p= \rho R T$ with $R$ being the specific gas constant. Then he provided a generalization to arbitrary inert fluids.

It is convenient to work with scalar and vector potentials $u$ and $\mathbf{w}$ from the Helmholtz decomposition $\mathbf{v} = \nabla \times \mathbf{w} + \nabla u$ of 
the fluid velocity vector $\mathbf{v}$, see \cite{Bla63} and \cite{Cri79}. Only retaining the first-order terms leads to
\begin{equation*}
s_t + \Delta u = 0, \quad  
\rho_0 u_t + (p-p_0) = 0, \quad 
\mathbf{w}= \mathrm{constant}, \quad 
p-p_0 = \rho_0 c_0^2 s = \gamma^{-1} \rho_0 c_0^2 (s + \Theta),
\end{equation*}
see (4) in \cite{Bla63}. Here, we used the condensation $s= \tfrac{\rho-\rho_0}{\rho}$, a new temperature variable $\Theta = \beta_0 (T-T_0)$ and the speed of propagation $c_0$.
The subscript zero always refers to the static value of the respective expression. 

Application of the approximation procedure outlined above, equations \eqref{continuity}, \eqref{momentum}, \eqref{entropy2} and $p=\rho R T$ can be reduced
respectively to 
\begin{align}
s_t + \Delta u = c_0^{-2} (g + h),& \label{mass2} \\
\nabla ( u_t + \tfrac{p-p_0}{\rho_0} - \Lambda \nu \Delta u + \tfrac{1}{2} (\nabla u \cdot \nabla u - c_0^{-2} (u_t)^2)) + \nabla \times (\mathbf{w}_t + \nu \nabla \times \nabla \times \mathbf{w})=0,& \label{momentum2}\\
\Theta_t + (\gamma -1) \Delta u - \gamma \mathrm{Pr}^{-1} \nu \Delta \Theta = (\gamma-1) c_0^{-2} (g + (\gamma-1)h),& \label{entropy3}\\
\Theta= \gamma (p-p_0)\rho_0^{-1} c_0^{-2} - s -(\gamma-1) s^2,& \label{state2}
\end{align}
where $g = \nabla u \cdot \nabla u_t$, $h = u_t \Delta u$, $\mathrm{Pr}$ is the Prandtl number, $\nu$ is the kinematic viscosity, $c_p$ is the specific heat at constant pressure 
and $\Lambda =  \tfrac{4}{3} + \tfrac{\mu_B}{\mu}$. 
We separate \eqref{momentum2} into two equations, namely into a rotational part
\begin{equation}
\label{mom:rot}
\mathbf{w}_t + \nu \nabla \times \nabla \times \mathbf{w} =0
\end{equation}
and the time derivative of the irrotational part,
\begin{equation}
\label{mom:irro}
u_{tt} + \rho_0^{-1} p_t - \Lambda \nu \Delta u_t = -f + g.
\end{equation}
We substitute \eqref{mass2} and \eqref{mom:irro} into the time derivative \eqref{state2} which yields 
\begin{equation}
c_0^2 \Theta_t = c_0^2 \Delta u - \gamma u_{tt} + \gamma \Lambda \nu \Delta u_t - (\gamma +1) g - (\gamma-1) h.
\end{equation}
Let us differentiate \eqref{entropy3} with respect to time and plug in the latter to obtain
\begin{equation}
\label{approx:final}
\begin{aligned}
&-c_0^2 \nu \mathrm{Pr}^{-1} \Delta^2 u + \nu \mathrm{Pr}^{-1} \Delta u_{tt} + (\Lambda + (\gamma-1) \mathrm{Pr}^{-1}) \nu \Delta u_{tt} \\
&- \nu^2 \mathrm{Pr}^{-1} \gamma \Lambda \Delta^2 u_t + c_0^2 \Delta u_t - u_{ttt} 
= \big( \tfrac{\gamma-1}{2c_0^2} (u_t)^2+ |\nabla u|^2\big)_{tt},
\end{aligned}
\end{equation}
where we have applied the basic rule to eliminate nonlinear dissipation terms. In contrast to (7) in \cite{Bla63}, 
we here retained $- \nu^2 \mathrm{Pr}^{-1} \gamma \Lambda \Delta^2 u_t$. The term $b= (\Lambda + (\gamma-1) \mathrm{Pr}^{-1}) \nu$
is referred to as acoustic diffusivity or diffusivity of sound (Lighthill). In case of an arbitrary inert fluid we replace $\gamma-1$ on the right-hand side
by the parameter of nonlinearity $B/A$, see Section 4.2 in \cite{Bla63}. Upon the coefficient of the $\Delta^2 u_t$-term, \eqref{approx:final} equals \eqref{BCK}. 
If we neglect boundary dissipation in the sense that wall effects are not important, we may approximate the flow as irrotational. In this case
\eqref{mom:rot} has the trivial solution $\mathbf{w}= \mathrm{constant}$ and is therefore of no further concern, see page 23 in \cite{Bla63}. 
The flow is then completely defined by \eqref{approx:final}. 

If temperature conditions are not significant, the fourth-order character of \eqref{approx:final} need not be preserved. In this case, \eqref{approx:final} 
can be reduced to Kuznetsov's equation which is considerably simpler, see page 23 in \cite{Bla63}. However, if the temperature constraints need to be retained, one needs to consider the fourth-order equation \eqref{approx:final}.

\section{The linearized initial boundary value problem}
\label{sec:linear}
Before we turn to the nonlinear analysis, we consider the linearized version of \eqref{IBVP:Dir:hom}, i.e.
\begin{equation}
\label{IBVP:Dir:hom:lin}
\begin{cases}
\begin{aligned}
(a\Delta - \partial_t)(u_{tt}-b\Delta u_t - c^2 \Delta u)&= f		&& \text{in }(0,T) \times \Omega,\\
(u,u_t,u_{tt})&=(u_0, u_1, u_2)							&& \text{on } \{t=0\}\times\Omega,\\
(u, \Delta u)&= (0,0)									&& \text{on } [0,T) \times \Gamma,
\end{aligned}
\end{cases}
\end{equation}
where $f\colon (0,T)\times \Omega \rightarrow \R$. 
In the present section shall consider \eqref{IBVP:Dir:hom:lin} in a general abstract form. More precisely, investigate the abstract 
linear initial value problem
\begin{equation}
\label{IBVP:abstract}
\begin{cases}
\left(-a\mathcal{A} - \partial_t \right) \left(u_{tt}(t)+c^2\mathcal{A}u(t) + b\mathcal{A}u_t(t) \right)=f(t), & t \in (0,T)\\
u(0)=u_0, \quad u_t(0)=u_1, \quad u_{tt}(0)=u_2 &
\end{cases}
\end{equation}
defined on a separable Hilbert space $\h$ which is endowed with the inner product $\spr{.}{..}_\h$ and the 
induced norm $\hnorm{.}$. Here, $\A\colon \DA \rightarrow \h$ is assumed to be a self-adjoint and closed linear operator 
whose domain of definition $\DA$ as well as $\D(\A^2)$ is dense in $\h$. Moreover, we suppose that the spectrum $\sig(\A)$ of $\A$ 
is contained in $(0,\infty)$ and consists only of eigenvalues of $\A$ with finite multiplicity. 
As a consequence, there exists some $\lam_0>0$ such that $\sig(\A) \subset [\lam_0, \infty)$. We will always set
$\lam_0 = \min \sig(\A)$. 
Moreover, since $\sigma(\A)$ is strictly positive, for $\Theta >0$ fractional powers $\A^\Theta$ of $\A$ are well-defined 
and $\A^\Theta$ is again a self-adjoint linear operator with domain of definition $\D(\A^\Theta)$ such that 
$\sigma(\A^\Theta) \subset (0,\infty)$. Note that $\D(\A^{\Theta_1}) \subset \D(\A^{\Theta_2})$ for $\Theta_1 > \Theta_2$.
We assume the embeddings
\begin{align}\label{Poincare}
\D(\A^{\nu}) &\hookrightarrow \h, \qquad&& \mbox{with } \hnorm{v} \leq C_{\D(\A^\nu), \h} \hnorm{\A^\nu v}, \nu=\tfrac12, 1, \tfrac32, \\
\DA &\hookrightarrow L_\infty(\Om), && \mbox{with } \norm{v}_{L_\infty} \leq C_{\DA,L^\infty} \hnorm{\A v}. \label{linfDA}
\end{align}
to hold. Clearly, in connection with \eqref{IBVP:Dir:hom:lin}, we always think of $\h=L_2(\Omega)$ and $\A= -\Delta$ being the 
negative Dirichlet Laplacian with domain of definition $\D(\Delta)=\{u \in H^2(\Om)\colon u|_\Gamma=0\}$. In this context, \eqref{Poincare} corresponds to 
the Poincar\'e inequality, i.e. to the (repeatedly used) embedding $H_0^1(\Om) \hookrightarrow L_2(\Om)$, and \eqref{linfDA} 
corresponds to the embedding $H^2(\Om) \cap H_0^1(\Om) \hookrightarrow L_\infty(\Om)$.

\subsection{Semigroup framework}
In this section we investigate the linearized problem associated with \eqref{IBVP:Dir:hom} by means of the
theory of operator semigroups. This has already been done in \cite[Section 3]{BrKa14} and resulted in
existence of a unique solution $u \in C^1([0,T]; \hii \cap \hio) \cap C^2([0,T]; L_2(\Om))$ of \eqref{IBVP:Dir:hom:lin} 
provided $u_0, u_1 \in \hii \cap \hio$ and $u_2 \in L_2(\Om)$ and $f \in C^\alpha([0,T]; L_2(\Om))$ is H\"older continuous
with exponent $\alpha \in (0,1)$, see Corollaries 3.14 and 3.16 in \cite{BrKa14}.
Here our aim is to obtain solutions with higher regularity which can be achieved by modifying 
the action space of the semigroup and the domain of definition of its generator. 
Like in \cite{BrKa14}, we represent \eqref{IBVP:abstract} as an abstract parabolic evolution equation
\begin{equation}
\label{parabolic}
U_t(t) = A U(t) + F(t), \, t \in (0,T), \qquad U(0) = U_0,
\end{equation}
by choosing
\begin{equation}\label{initialcondmatrix}
U(t)= \begin{pmatrix} u(t) \\ u_t(t) \\ u_{tt}(t) +b\A u_t(t) + c^2 \A u(t)\end{pmatrix}, \qquad
U_0 = \begin{pmatrix} u_0 \\ u_1 \\ u_2  + b \A u_1+ c^2 \A u_0 \end{pmatrix}, 
\end{equation}
and $F(t) = (0,0,f(t))^\top$, but here consider the action space $H = \D(\A^2) \times \DA \times \h$ endowed with the inner product
\begin{equation} \label{spr}
\spr{v}{w}_H = \big(\tfrac{\alpha b}{2} \big)^2 \spr{\A^2 v_1}{\A^2 w_1}_\h + \spr{\A v_2}{\A w_2}_\h + \spr{v_3}{w_3}_\h
\end{equation}
for $v = (v_1, v_2, v_3)$ and $w=(w_1, w_2, w_3)$ in $H$ and the closed linear operator 
$A\colon \D(A) \rightarrow H$ defined by
\begin{equation}\label{generator1}
A= \begin{pmatrix} 0 & I & 0 \\ -c^2 \A & -b\A  & I \\  0 & 0 & -a \A \end{pmatrix},\qquad
\D(A)=\D(\A^2) \times \D(\A^2) \times \DA.
\end{equation}
The constant $\alpha >0$ in \eqref{spr} will be chosen appropriately during the proof of the next result.
\begin{proposition}
The operator $A\colon \D(A) \rightarrow H$ given by \eqref{generator1} generates an analytic semigroup on $H$.
\end{proposition}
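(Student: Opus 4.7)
My plan is to prove that $-A$ is sectorial on $H$, by first applying the Lumer--Phillips theorem to produce a quasi-contractive $C_0$-semigroup and then upgrading to analyticity via a numerical-range argument. The free parameter $\alpha$ in \eqref{spr} will be tuned during the energy computation so that the cross terms appearing in $\spr{Av}{v}_H$ are controlled by the dissipative contributions coming from the diagonal terms $-b\A$ and $-a\A$.

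Density of $\D(A) = \D(\A^2)\times\D(\A^2)\times\DA$ in $H$ and closedness of $A$ are routine: the former follows from the density of $\D(\A^2)$ in $\DA$ in $\h$ via the spectral decomposition of $\A$ (whose purely-eigenvalue spectrum is bounded below by $\lam_0 > 0$), and the latter from closedness of $\A$ and its integer powers together with the block structure. The heart of the argument is the energy identity: for $v = (v_1,v_2,v_3) \in \D(A)$, self-adjointness of $\A$ reduces $\re \spr{Av}{v}_H$ to the good diagonal dissipation $-b\hnorm{\A^{3/2} v_2}^2 - a\hnorm{\A^{1/2} v_3}^2$ plus cross contributions of the form $(\alpha b/2)^2 \re\spr{\A^2 v_2}{\A^2 v_1}_\h$, $-c^2 \re\spr{\A^{3/2} v_1}{\A^{3/2} v_2}_\h$ and $\re\spr{\A^{3/2} v_2}{\A^{1/2} v_3}_\h$. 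Applying Cauchy--Schwarz and Young's inequality with carefully selected weights, and using the embeddings \eqref{Poincare} to subordinate lower powers of $\A$ to the higher ones entering $\norm{v}_H^2$, one obtains $\re \spr{Av}{v}_H \leq \omega \norm{v}_H^2$ for some $\omega \geq 0$, provided $\alpha$ is chosen large enough.

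For the range condition I would exploit the block-triangular structure and solve $(\lam I - A)v = w$ in stages: the third equation $(\lam I + a\A)v_3 = w_3$ is uniquely solvable in $\DA$; substituting $v_2 = \lam v_1 - w_1$ from the first equation into the second reduces the coupled $(v_1,v_2)$-system to the scalar elliptic problem $(\lam^2 I + (c^2 + b\lam)\A) v_1 = w_2 + \lam w_1 + b\A w_1 + v_3$, uniquely solvable in $\D(\A^2)$ because the spectral symbol $\lam^2 + (c^2 + b\lam)\mu$ is bounded away from zero on $\sig(\A) \subset [\lam_0, \infty)$ for $\lam$ in a suitable right half-plane. Lumer--Phillips then yields the $C_0$-semigroup. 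Analyticity follows by complementing the real-part estimate with an imaginary-part bound $|\im \spr{Av}{v}_H| \leq C\bigl(\omega\norm{v}_H^2 - \re \spr{Av}{v}_H\bigr)$, obtained via the same Cauchy--Schwarz arguments applied to the imaginary parts of the cross terms. This confines the numerical range of $A$ (after shifting by $\omega$) to a sector of opening strictly less than $\pi$, and the Hilbert-space generator criterion delivers analyticity. The main technical obstacle is balancing the Young-inequality weights with the choice of $\alpha$ so that both the real-part and imaginary-part estimates hold simultaneously --- precisely the purpose of the free parameter $\alpha$.
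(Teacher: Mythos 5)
Your strategy founders on the very first cross term you list. For $v=(v_1,v_2,v_3)\in\D(A)$ the contribution of the first component of $Av=(v_2,\,-c^2\A v_1-b\A v_2+v_3,\,-a\A v_3)$ to $\spr{Av}{v}_H$ is $(\tfrac{\alpha b}{2})^2\spr{\A^2 v_2}{\A^2 v_1}_\h$, which involves $\hnorm{\A^2 v_2}$ --- two powers of $\A$ above the $v_2$-weight $\hnorm{\A v_2}$ in $\norm{v}_H$ and half a power above the dissipation $-b\hnorm{\A^{3/2}v_2}^2$ produced by the diagonal. The embeddings \eqref{Poincare} subordinate \emph{lower} powers of $\A$ to higher ones, so they run in the wrong direction here, and no choice of Young weights or of $\alpha$ can absorb this term. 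Concretely, take $v_3=0$ and $v_1=s\phi_n$, $v_2=t\phi_n$ for an eigenfunction $\phi_n$ of $\A$ with eigenvalue $\lam_n$; with $s=(\tfrac{\alpha b}{2})^{-1}\lam_n^{-2}$ and $t=\lam_n^{-3/2}$ one finds $\norm{v}_H^2$ bounded while $\re\spr{Av}{v}_H\to+\infty$ as $\lam_n\to\infty$. Hence the numerical range of $A$ in the inner product \eqref{spr} is unbounded to the right for \emph{every} $\alpha>0$, the quasi-dissipativity estimate $\re\spr{Av}{v}_H\le\omega\norm{v}_H^2$ is false, and the Lumer--Phillips/sector-of-numerical-range route cannot be run in this inner product. (Your remark that $\alpha$ should be chosen ``large enough'' is also the opposite of what is needed; the paper requires $\alpha$ small. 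Note that this failure is consistent with the semigroup only satisfying $\norm{\E^{tA}}\le \tilde M\E^{-\om t}$ with $\tilde M>1$.)

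The paper circumvents exactly this obstruction by a perturbation argument rather than a numerical-range argument: it writes $A=A_1+A_2$ with $A_1=\mathrm{diag}(0,-b\A,-a\A)$ self-adjoint and nonpositive (hence generating an analytic semigroup) and shows that $A_2$, which carries the off-diagonal couplings including the troublesome identity in the $(1,2)$ slot, is relatively $A_1$-bounded with relative bound $\tfrac{\alpha}{2}$. The point is that $(\tfrac{\alpha b}{2})\hnorm{\A^2 v_2}=\tfrac{\alpha}{2}\hnorm{\A(b\A v_2)}\le\tfrac{\alpha}{2}\norm{A_1v}_H$: the bad term is dominated by $\norm{A_1v}_H$, a quantity a relative-bound perturbation theorem can exploit but which is invisible to an estimate of $\re\spr{Av}{v}_H$ against $\norm{v}_H^2$. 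Choosing $\alpha$ small makes the relative bound less than $1$ and the perturbation theorem for analytic semigroups applies. If you wish to salvage a direct argument you would have to change the inner product so that the first component is weighted compatibly with the coupling (at which point other cross terms become problematic), or else adopt the relative-boundedness route; your resolvent computation for the range condition is fine but does not by itself yield analyticity.
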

\begin{proof}
The proof follows \cite[Section 3.1.2]{BrKa14}. Here we just outline the main steps. We decompose $A$, $A= A_1 + A_2$, where
\begin{equation*}
A_1 = \begin{pmatrix} 0 & 0 & 0 \\ 0 & -b\A & 0 \\ 0 & 0 & -a\A \end{pmatrix} 
\quad \text{and} \quad 
A_2 = \begin{pmatrix} 0 & I & 0 \\ -c^2\A & 0 & I \\ 0 & 0 & 0 \end{pmatrix}. 
\end{equation*}
For $v, w \in \D(A)$ we have $\spr{A_1 v}{w}_H = \spr{v}{A_1 w}_H$. Moreover, $A_1 \colon \D(A) \rightarrow H$ 
is densely defined and closed, hence there exists a self-adjoint extension of $A_1$. 
Since $\sigma(A_1) \subset (-\infty, 0]$ the analyticity of the semigroup generated by $A_1$ 
follows at once from \cite[Corollary II.4.7]{EnNa00}.
Moreover, $A_2\colon H \rightarrow H$ is relatively $A_1$-bounded with $A_1$-bound $\tfrac{\alpha}{2} < \alpha$. 
In particular, we have $\norm{A_2 v}_H = \tfrac{\alpha}{2} \norm{A_1 v}_H + \sqrt{2} \max \{\tfrac{2c^2}{\alpha b}, 1 \} \norm{v}_H$ 
for all $v \in \D(A)$. The latter is derived analogously to (3.21) in \cite{BrKa14} using $\norm{\cdot}_H = (\spr{\cdot}{\cdot}_H)^{1/2}$.
The analyticity of the semigroup generated by $A\colon \D(A) \rightarrow H$ follows from \cite[Theorem III.2.10]{EnNa00} 
provided $\alpha >0$ in \eqref{spr} is chosen sufficiently small.
\end{proof}

\begin{remark}
The presence of the damping parameter $b>0$ is essential in order to obtain analyticity of the semigroup 
generated by $A\colon \D(A) \rightarrow H$. If $b = 0$, the spectrum of $A$ is given by 
$\sig(A) = \{-a\lam_n,\pm \I c \lam_n \colon \lam_n \in \sig(\A)\}$. Hence, in this case, $A$ is not a sectorial operator 
and can thus not be the generator of an analytic semigroup, cf. Remark 3.11 in \cite{BrKa14}.
\end{remark}

\begin{proposition}
Suppose $U_0 \in \D(A)$. Let moreover $f \in C^\alpha([0,T]; \h)$
be H\"older continuous with exponent $\alpha \in (0,1)$. Then the abstract initial value problem \eqref{parabolic}
has a unique solution $U \in C([0,T];\D(A)) \cap  C^1([0,T];H)$
which is given by 
\begin{equation*}
U(t) = \E^{tA} U_0 + \int_0^t \E^{(t-s)A} F(s) \, ds, \qquad 0 \leq t \leq T.
\end{equation*}
\end{proposition}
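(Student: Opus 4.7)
The plan is to invoke a standard theorem from the theory of analytic semigroups (as found, e.g., in \cite{EnNa00} or Pazy's book) on classical solutions of inhomogeneous abstract Cauchy problems with Hölder continuous forcing. All the substantive analytic work has already been done in establishing that $A\colon \D(A)\to H$ generates an analytic semigroup.

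First I would verify that the inhomogeneity $F(t)=(0,0,f(t))^\top$ inherits Hölder continuity from $f$: since only the third component is nontrivial, we have $\|F(t)-F(s)\|_H = \|f(t)-f(s)\|_\h$, so $F \in C^\alpha([0,T];H)$ with the same Hölder exponent. The initial datum $U_0$ lies in $\D(A)=\D(\A^2)\times\D(\A^2)\times\DA$ by hypothesis, which is the natural domain for a classical solution.

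Next, I would apply the standard classical-solution result for $U_t(t)=AU(t)+F(t)$, $U(0)=U_0$: because $A$ generates an analytic semigroup $(\E^{tA})_{t\geq 0}$, because $U_0\in\D(A)$, and because $F\in C^\alpha([0,T];H)$, the convolution term $v(t)=\int_0^t \E^{(t-s)A}F(s)\,ds$ belongs to $C([0,T];\D(A))\cap C^1([0,T];H)$, and the homogeneous term $t\mapsto \E^{tA}U_0$ enjoys the same regularity since $U_0\in\D(A)$ and the semigroup commutes with $A$ on $\D(A)$. Their sum, the variation-of-constants formula, thus lies in $C([0,T];\D(A))\cap C^1([0,T];H)$ and satisfies the abstract equation pointwise.

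Uniqueness is the final (easiest) step: if $U_1,U_2$ are two solutions, then $W=U_1-U_2$ solves the homogeneous problem $W_t=AW$, $W(0)=0$. Since $A$ is the generator of a $C_0$-semigroup, the unique classical solution of the homogeneous Cauchy problem with zero initial data is $W\equiv 0$ (equivalently, $W(t)=\E^{tA}\cdot 0=0$). I do not foresee a genuine obstacle here: the only thing to check carefully is that Hölder regularity of $F$ in the norm $\|\cdot\|_H$ really does transfer from that of $f$ in $\|\cdot\|_\h$, which is immediate from the definition \eqref{spr} of the inner product on $H$ since only the third component contributes. The result then follows directly by citing the relevant theorem (e.g., \cite[Theorem 4.3.1]{Lun95}-type statements, or the analogous corollary in \cite{EnNa00}).
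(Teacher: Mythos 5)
Your proof is correct and follows essentially the same route as the paper: both verify the hypotheses (Hölder continuity of $F$, $U_0\in\D(A)$) and then invoke the standard result on strict solutions of inhomogeneous parabolic problems, namely \cite[Theorem 4.3.1]{Lun95}. The only detail the paper makes explicit that you omit is the compatibility condition $AU_0+F(0)\in\overline{\D(A)}$ required by that theorem, which is automatic here since $\D(A)$ is dense in $H$.
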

\begin{proof}
It suffices to observe that $AU_0 +F(0)$ is in the closure of $\D(A)$ and use \cite[Theorem 4.3.1]{Lun95} on
the existence of a unique strong solution of inhomogeneous parabolic problems.
\end{proof}

\begin{corollary}
\label{cor:homsemigroup}
Suppose $u_0, u_1 \in H^4(\Om)$ and $u_2 \in \hii$ such that 
$u_0 |_\Gamma = \Delta u_0  |_\Gamma= u_1  |_\Gamma = \Delta u_1 |_\Gamma = u_2  |_\Gamma = 0$ in the sense of traces. Let moreover $f \in C^\alpha([0,T]; \h)$
be H\"older continuous with exponent $\alpha \in (0,1)$. Then the linear boundary value problem \eqref{IBVP:Dir:hom:lin}
has a unique solution
\begin{equation*}
u \in  C^1([0,T];H^4(\Om)) \cap C^2([0,T];\hii) \cap C^3([0,T]; L_2(\Om))
\end{equation*}
such that $u(t) |_\Gamma = \Delta u(t)  |_\Gamma= u_t(t)  |_\Gamma = \Delta u_t(t) |_\Gamma = u_{tt} (t) |_\Gamma = 0$ in the sense of traces for all $t \in (0,T]$.
\end{corollary}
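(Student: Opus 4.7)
The plan is to specialize the preceding abstract proposition to the concrete Hilbert space $\h = L_2(\Om)$ with $\A = -\Delta$ on $\D(\A) = \hii \cap \hio$. In this setting elliptic regularity identifies $\D(\A^2) = \{v \in H^4(\Om) : v|_\Gamma = \Delta v|_\Gamma = 0\}$, so that $\D(A) = \D(\A^2) \times \D(\A^2) \times \D(\A)$ is precisely the expected $H^4 \times H^4 \times \hii$-type scale with the appropriate trace conditions, and the abstract equation \eqref{IBVP:abstract} reduces componentwise to \eqref{IBVP:Dir:hom:lin} in $L_2(\Om)$.

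First I would verify that the hypotheses on $u_0, u_1, u_2$ translate exactly to $U_0 \in \D(A)$. The conditions on $u_0, u_1$ are literally $u_0, u_1 \in \D(\A^2)$. For the third slot $u_2 + b\A u_1 + c^2\A u_0$ one must check membership in $\D(\A) = \hii \cap \hio$: the $\hii$-regularity is immediate because $u_2 \in \hii$ and $\A u_i = -\Delta u_i \in \D(\A) \subset \hii$, and the boundary trace collapses to $u_2|_\Gamma - b(\Delta u_1)|_\Gamma - c^2(\Delta u_0)|_\Gamma = 0$ by the compatibility $\Delta u_i|_\Gamma = u_2|_\Gamma = 0$. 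Thus $U_0 \in \D(A)$, and the preceding proposition delivers a unique $U \in C([0,T]; \D(A)) \cap C^1([0,T]; H)$.

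It then remains to unpack the regularity of $U$ componentwise. The first coordinate gives $u \in C^1([0,T]; \D(\A^2)) \subset C^1([0,T]; H^4(\Om))$ with $u(t)|_\Gamma = \Delta u(t)|_\Gamma = 0$, while the second yields $u_{tt} \in C([0,T]; \D(\A)) \subset C([0,T]; \hii)$ with $u_{tt}(t)|_\Gamma = 0$. The only step that is not pure bookkeeping --- and hence the main obstacle --- is producing $u \in C^3([0,T]; L_2(\Om))$. For this I would exploit that the third coordinate $V := u_{tt} + b\A u_t + c^2 \A u$ lies in $C^1([0,T]; \h)$ and differentiate in $t$ to obtain
\begin{equation*}
u_{ttt} = V_t - b\A u_{tt} - c^2 \A u_t,
\end{equation*}
noting that $V_t \in C([0,T]; L_2(\Om))$ by construction, $\A u_{tt} = -\Delta u_{tt} \in C([0,T]; L_2(\Om))$ since $u_{tt} \in C([0,T]; \hii)$, and $\A u_t \in C([0,T]; \hii)$ since $u_t \in C([0,T]; H^4(\Om))$. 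Writing \eqref{IBVP:abstract} out coordinatewise then recovers \eqref{IBVP:Dir:hom:lin} pointwise in $L_2(\Om)$, and the trace conditions at each $t$ are inherited from the pointwise membership in $\D(\A)$ respectively $\D(\A^2)$.
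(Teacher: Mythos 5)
Your proposal is correct and follows essentially the same route as the paper: specialize the abstract proposition to $\A=-\Delta$ on $\D(\A)=\{v\in\hii\colon v|_\Gamma=0\}$, check that the hypotheses amount to $U_0\in\D(A)$, and unpack the componentwise regularity of $U\in C([0,T];\D(A))\cap C^1([0,T];H)$. In fact you supply more detail than the paper (which dismisses the bookkeeping as ``straightforward''), in particular the derivation of $u\in C^3([0,T];L_2(\Om))$ from $u_{tt}=V-b\A u_t-c^2\A u$ with each term in $C^1([0,T];\h)$.
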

\begin{proof}
We now use the choices $\A= - \Delta$, $D(\A) = \{ u \in H^2(\Om)\colon u|_\Gamma =0\}$ and $\h = L_2(\Om))$. 
Therewith $u_0 \in \D(\A^2)$ results in $u_0 \in H^4(\Om)$ with $u_0 |_\Gamma = \Delta u_0 |_\Gamma =0$. The 
same holds for $u_1$. Using the regularity of $u_0$ and $u_1$, we see that $u_2 + b\A u_1 + c^2 \A u_0 \in \DA$ 
gives us $u_2 \in H^2(\Om)$ such that $u_2 |_\Gamma = 0$. 
Furthermore, it is straightforward to check that $U \in C([0,T];\D(A)) \cap C^1([0,T];H)$ implies $u \in C([0,T]; \D(\A^2) \cap C^1([0,T];\D(\A^2))$, $u_t \in C([0,T]; \D(\A^2) \cap C^1([0,T];\D(\A))$ and 
$u_{tt} +b \A u_t + c^2 \A u \in C([0,T]; \D(\A) \cap C^1([0,T];\h)$. Therewith, again using the explicit choices for $\A$ and $\h$,  
we obtain the regularity as well as the traces of $u$ as stated.
\end{proof}

The spectral bound $s(A)=\{\re(\lam)\colon \lam \in \sigma(A)\}$ of $A$ is given by $s(A)=-\min \{a\lam_0, \tfrac{b}{2}\lam_0,\tfrac{c^2}{b}\}$,
where $\lam_0=\min \sigma(\A)$, see Lemma 3.12 in \cite{BrKa14}. As a consequence, we obtain an 
exponential decay result for the homogeneous equation.
\begin{proposition}
Let $u_0, u_1$ and $u_2$ be given as in Corollary \ref{cor:homsemigroup} and suppose $f\equiv 0$. Then
any solution $u$ of \eqref{IBVP:Dir:hom:lin} decays exponentially fast to zero in the sense that there exist 
$M, \om >0$ such that 
\begin{equation*}
E[u](t) \leq M \E^{-\om t} E[u](0),\qquad t>0
\end{equation*}
where $E[u](t) = \|u(t)\|_{H^4}^2 + \|u_t(t)\|_{H^4}^2 + \|u_{tt}(t)-b\Delta u_t(t) -c^2\Delta u(t)\|_{H^2}^2$.
\end{proposition}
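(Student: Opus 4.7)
My plan is to deduce the estimate from the exponential decay of $\E^{tA}$ on $H$, combined with the commutation of $A$ with $\E^{tA}$ on $\D(A)$, and then identify the resulting graph-norm decay with the $E[u]$-decay claimed.

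By the preceding proposition applied with $f\equiv 0$, the solution has the form $U(t) = \E^{tA} U_0$, where $U_0 \in \D(A)$ is built from $(u_0,u_1,u_2)$ as in \eqref{initialcondmatrix}. Since $A$ generates an analytic semigroup on the Hilbert space $H$, its growth bound coincides with the spectral bound $s(A) = -\min\{a\lam_0,(b/2)\lam_0,c^2/b\} < 0$ recalled just before the statement, so one obtains constants $M\geq 1$ and $\om > 0$ with $\|\E^{tA}\|_{\mathcal{L}(H)} \leq M \E^{-\om t}$. Using the commutation $A \E^{tA} U_0 = \E^{tA} A U_0$, valid on $\D(A)$, this upgrades to
\begin{equation*}
\|U(t)\|_H^2 + \|AU(t)\|_H^2 \leq M^2 \E^{-2\om t} \bigl(\|U_0\|_H^2 + \|AU_0\|_H^2\bigr).
\end{equation*}

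What remains is to show that $\|U\|_H^2 + \|AU\|_H^2$ is equivalent to $E[u]$. From \eqref{spr} the $H$-norm contributes $(\alpha b/2)^2 \|\A^2 u\|_\h^2 + \|\A u_t\|_\h^2 + \|u_{tt} - b\Delta u_t - c^2 \Delta u\|_\h^2$, while $AU = (u_t,\, u_{tt},\, -a\A(u_{tt} + b\A u_t + c^2 \A u))$ by \eqref{generator1}, so that $\|AU\|_H^2$ adds $(\alpha b/2)^2 \|\A^2 u_t\|_\h^2 + \|\A u_{tt}\|_\h^2 + a^2 \|\A(u_{tt} - b\Delta u_t - c^2\Delta u)\|_\h^2$. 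Via the elliptic identification $\|\A^k v\|_\h \sim \|v\|_{H^{2k}}$ (valid due to the boundary compatibility built into $\D(\A^k)$ and the smoothness of $\Gamma$), these six summands control and are controlled by the three terms of $E[u]$; the only non-automatic upper bound is $\|u_{tt}\|_{H^2} \leq \|u_{tt}-b\Delta u_t - c^2\Delta u\|_{H^2} + b\|u_t\|_{H^4} + c^2\|u\|_{H^4}$, used to absorb the spare summand $\|\A u_{tt}\|_\h^2$ into $E[u]$.

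I do not expect any genuine obstacle: the whole argument is a clean application of analytic-semigroup decay together with the explicit shapes of $\spr{\cdot}{\cdot}_H$ in \eqref{spr} and of $A$ in \eqref{generator1}. The only point requiring care is the final norm-equivalence bookkeeping, and that boils down to the standard identification $\|\A^k v\|_\h \sim \|v\|_{H^{2k}}$ on the spaces $\D(\A^k)$ used throughout. The desired constants are then $\tilde M = \sqrt{c_2/c_1}\,M$ and $\tilde\om = 2\om$, where $c_1, c_2$ are the equivalence constants of the graph norm with $E[u]^{1/2}$.
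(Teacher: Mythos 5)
Your proposal is correct and follows essentially the same route as the paper: exponential stability of $\E^{tA}$ from the negative spectral bound of the analytic generator $A$, followed by identifying the decaying quantity with $E[u]$. In fact you are more careful than the paper, which only states decay of $\norm{U(t)}_H$ and waves at "rescaling"; your explicit passage to the graph norm via $A\E^{tA}U_0=\E^{tA}AU_0$ is exactly what is needed to reach the $\norm{u_t}_{H^4}$ and $\norm{u_{tt}-b\Delta u_t-c^2\Delta u}_{H^2}$ terms in $E[u]$.
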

\begin{proof}
Since $s(A) <0$, the semigroup $\E^{tA}$ generated by $A$ is uniformly exponentially stable. Hence there are constants 
$\tilde{M} > 1$ and $\omega >0$ such that $\norm{\E^{tA}U(0)}_H  = \norm{U(t)}_H \leq \tilde{M} \E^{-\om t} \norm{U(0)}_H$. Rescaling the latter 
immediately yields the claim.
\end{proof}

\subsection{Energy estimates}
\label{sec:energy}
Our second approach to treat the linear equation is to derive energy estimates of which we will also further make use of when
proving global well-posedness of our nonlinear Dirichlet boundary value problem \eqref{IBVP:Dir:hom}.
For now we consider \eqref{IBVP:Dir:hom:lin} or, in a more abstract form \eqref{IBVP:abstract}. We assume for the moment that $u$ is sufficiently
smooth and interchange the order of differentiation on the left-hand side of our PDE, that is, we consider
\begin{equation}
\label{IBVP:abstract:interchange}
\begin{cases}
(\partial_t^2+b\A \partial_t +c^2\A)(a\A u + u_t)= -f, & t \in (0,T), \\
u(0)=u_0, \quad u_t(0)=u_1, \quad u_{tt}(0)=u_2. &
\end{cases}
\end{equation}
We introduce the differential expressions $D_h=\partial_{t} + a\A$ and $D_w=\partial_{t}^2 + b\A \partial_t + c^2 \A$
corresponding to the heat equation and the Westervelt equation, respectively. Therewith, we represent 
\eqref{IBVP:abstract:interchange} as
\begin{equation*}
D_w w = -  f \qquad \text{ and } \qquad D_h u=w,
\end{equation*}
where later on we shall insert $f=(k (u_t)^2+s|\nabla u|^2)_{tt}$. For sufficiently smooth $u$ and $w= u_t + \A u$, we define the energies
\begin{align}
E_1[w](t)&=\tfrac{1}{2}\big( \hnorm{\A^{1/2}w_{tt}(t)}^2+\hnorm{\A^{1/2}w_t(t)}^2+\hnorm{\A w(t)}^2\big),\label{energy:west}\\
E_2[u](t)&= \tfrac{1}{2} \big( \hnorm{\A^{1/2}u_{ttt}(t)}^2+\hnorm{\A u_{tt}(t)}^2+\norm{\A^{3/2}u_{t}(t)}^2+ \hnorm{\A^{3/2}u}^2\big) \label{energy:u}
\end{align}
as well as the sum of \eqref{energy:west} and \eqref{energy:u},
\begin{equation}
\cE[u](t)= E_1[w](t) + E_2[u](t).\label{energy:sum}
\end{equation}
We mention that \eqref{energy:west} and \eqref{energy:u} are motivated by multiplication of $D_w w = -f$ and $D_h u = w$ with a 
suitable time derivative of $w$ and $u$, respectively, see \cite{KaLa09} or \eqref{id:heat1}.

\begin{lemma}[{\cite[Lemma 4.3]{BrKa14}}]
\label{lem:est1}
Provided $f \in H^1(0,T;\h)$, any solution $w$ of $D_w w=-f$ fulfills the estimate
\begin{equation}
\label{est1}
\begin{aligned}
&E_1[w](t)+ \check{b}_1 \int_0^t
\hnorm{w_{ttt}(\tau)}^2+\hnorm{\A^{1/2}w_{tt}(\tau)}^2+\hnorm{\A w_t(\tau)}^2+\hnorm{\A w(\tau)}^2\,d\tau\\
&\leq \check{c}_1 \left(E_1[w](0)+\int_0^t \hnorm{f_t(\tau)}^2+\hnorm{f(\tau)}^2\, d\tau\right)
\end{aligned}
\end{equation}
for $t\in (0,T)$ with $\check{b}_1>0$ sufficiently small and $\check{c}_1>0$ sufficiently large.
\end{lemma}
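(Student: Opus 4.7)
The plan is to apply the standard multiplier method to the abstract damped wave equation $D_w w = -f$ and combine the resulting energy identities with pointwise bounds extracted directly from the equation to recover the dissipative norms that are not produced by a single multiplier choice.

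First, I would test $D_w w = -f$ in $\h$ with the multiplier $\A w_t$. Using self-adjointness of $\A$, the three terms rearrange as
\begin{equation*}
\tfrac{1}{2}\tfrac{d}{dt}\!\left(\hnorm{\A^{1/2} w_t}^2 + c^2\hnorm{\A w}^2\right) + b\hnorm{\A w_t}^2 = -\spr{f}{\A w_t}_\h.
\end{equation*}
After Young's inequality on the right (splitting into $\tfrac{1}{b}\hnorm{f}^2$ and a factor $\tfrac{b}{2}\hnorm{\A w_t}^2$ absorbed on the left) and integration over $(0,t)$, this yields control of $\hnorm{\A^{1/2} w_t(t)}^2$, $\hnorm{\A w(t)}^2$, and $\int_0^t \hnorm{\A w_t(\tau)}^2 d\tau$ by $E_1[w](0)$ and $\int_0^t \hnorm{f(\tau)}^2 d\tau$.

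Second, I would differentiate the equation in time, obtaining $D_w w_t = -f_t$, and test with $w_{ttt}$. The damping contribution becomes $b\spr{\A w_{tt}}{w_{ttt}} = \tfrac{b}{2}\tfrac{d}{dt}\hnorm{\A^{1/2} w_{tt}}^2$, while the remaining inner products are absorbed via Young:
\begin{equation*}
\tfrac{1}{2}\hnorm{w_{ttt}}^2 + \tfrac{b}{2}\tfrac{d}{dt}\hnorm{\A^{1/2} w_{tt}}^2 \leq c^4\hnorm{\A w_t}^2 + \hnorm{f_t}^2.
\end{equation*}
Integrating and substituting the bound on $\int_0^t \hnorm{\A w_t}^2 d\tau$ from the first step produces the $\hnorm{\A^{1/2} w_{tt}(t)}^2$ piece of $E_1[w](t)$ together with the dissipation $\int_0^t\hnorm{w_{ttt}(\tau)}^2 d\tau$.

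Third, to pick up the two remaining dissipative terms in \eqref{est1}, I would use the equations as pointwise bounds (which is essential to keep constants independent of $t$, avoiding the weak estimate $\int \leq t\cdot \sup$). Solving $D_w w_t = -f_t$ for $b\A w_{tt}$ gives $\hnorm{\A w_{tt}}^2 \lesssim \hnorm{w_{ttt}}^2 + \hnorm{\A w_t}^2 + \hnorm{f_t}^2$; combined with the Poincar\'e-type bound $\hnorm{\A^{1/2} v} \leq \lam_0^{-1/2}\hnorm{\A v}$ from \eqref{Poincare}, this controls $\int_0^t \hnorm{\A^{1/2} w_{tt}}^2 d\tau$ by the quantities already estimated. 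Likewise, solving $D_w w = -f$ for $c^2\A w$ and using $\hnorm{v} \leq C_{\D(\A^{1/2}),\h}\hnorm{\A^{1/2} v}$ bounds $\int_0^t\hnorm{\A w(\tau)}^2 d\tau$ in terms of $\int_0^t\hnorm{\A^{1/2} w_{tt}}^2 + \int_0^t\hnorm{\A w_t}^2 + \int_0^t\hnorm{f}^2$. A weighted linear combination of the first and second energy identities, with the weight on the second taken small enough that the term $c^4\int_0^t\hnorm{\A w_t}^2$ from its Young step is absorbed by the dissipation $b\int_0^t\hnorm{\A w_t}^2$ from the first, then yields \eqref{est1} with $\check{b}_1 > 0$ sufficiently small and $\check{c}_1 > 0$ sufficiently large. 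The main obstacle is the careful bookkeeping ensuring that every cross term produced by Young's inequality is absorbed without invoking Gr\"onwall; this works because the equation is linear and strictly dissipative (thanks to $b > 0$), and because the spectral gap $\lam_0 > 0$ permits comparison of the fractional powers of $\A$ appearing throughout.
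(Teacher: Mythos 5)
Your proof is correct: the paper itself gives no argument for this lemma (it is quoted verbatim from \cite[Lemma 4.3]{BrKa14}), and your multiplier scheme --- testing with $\A w_t$, then testing the time-differentiated equation with $w_{ttt}$, and recovering the remaining dissipative integrals $\int_0^t\hnorm{\A^{1/2}w_{tt}}^2$ and $\int_0^t\hnorm{\A w}^2$ pointwise from the equation together with the spectral bound $\lam_0>0$ --- is exactly the standard energy argument the paper alludes to when it says the energies are ``motivated by multiplication of $D_w w=-f$ with a suitable time derivative of $w$.'' The only caveat, shared with the paper, is that the manipulations presuppose enough regularity of $w$ to differentiate the equation in time, which is the usual formal-estimate-plus-density convention.
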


Now we use the following energy identity for the heat equation
\begin{equation}\label{id:heat1}
\begin{aligned}
&\int_0^t \hnorm{D_h v(\tau)}^2\, d\tau  = \int_0^t \hnorm{v_t(\tau)+a\A v(\tau)}^2\, d\tau \\
&=a\hnorm{\A^{1/2}v(t)}^2-a\hnorm{\A^{1/2}v(0)}^2+\int_0^t \hnorm{v_t(\tau)}^2+a^2\hnorm{\A v(\tau)}^2\,d\tau,
\end{aligned}
\end{equation}
where the second equality follows from $\hnorm{v_t(\tau)+a\A v(\tau)}^2 = \spr{v_t(\tau)+a\A v(\tau)}{v_t(\tau)+a\A v(\tau)}_\h$  and integration by parts. We apply \eqref{id:heat1} to $v=u_{ttt}$ (i.e. $D_h v=w_{ttt}$), $v=\A^{1/2} u_{tt}$ (i.e. $D_h v=\A^{1/2}w_{tt}$)
$v=\A u_t$ (i.e. $D_h v= \A w_t$) and $v= \A u$ (i.e. $D_h v = \A w$) and obtain
\begin{align*}
\int_0^t \hnorm{w_{ttt}(\tau)}^2\, d\tau =~&a\hnorm{\A^{1/2}u_{ttt}(t)}^2 -a\hnorm{\A^{1/2} u_{ttt}(0)}^2\\ 
&+\int_0^t \hnorm{u_{tttt}(\tau)}^2+a^2\hnorm{\A u_{ttt}(\tau)}^2\, d\tau,\\
\int_0^t \hnorm{\A^{1/2}w_{tt}(\tau)}^2\, d\tau =~&a\hnorm{\A u_{tt}(t)}^2-a\hnorm{\A u_{tt}(0)}^2\\ 
&+\int_0^t \hnorm{\A^{1/2} u_{ttt}(\tau)}^2+a^2\hnorm{\A^{3/2} u_{tt}(\tau)}^2\,d\tau,\\
\int_0^t \hnorm{\A w_t(\tau)}^2\, d\tau =~& a\hnorm{\A^{3/2}u_{t}(t)}^2-a\hnorm{\A^{3/2}u_{t}(0)}^2\\
&+\int_0^t \hnorm{\A u_{tt}(\tau)}^2+a^2\hnorm{\A^2 u_{t}(\tau)}^2\, d\tau,\\
\int_0^t \hnorm{\A w(\tau)}^2\, d\tau =~& a \hnorm{\A^{3/2} u(t)}^2 - a \hnorm{\A^{3/2} u(0)}^2\\
&+\int_0^t \hnorm{\A u_t(\tau)}^2 + a^2 \hnorm{\A^2 u(\tau)}^2\, d\tau.
\end{align*}
Therewith, by splitting the terms $\hnorm{w_{ttt}(\tau)}^2$, $\hnorm{\A w_{tt}(\tau)}^2$,  $\hnorm{\A^{1/2} w_t(\tau)}^2$ and $\hnorm{\A w(\tau)}^2$ on the left-hand side
of the estimate in Lemma \ref{lem:est1}, we arrive at 
\begin{align*}
& \cE[u](t) + \hat{b}_1 \int_0^t \big\{ \hnorm{w_{ttt}(\tau)}^2 + \hnorm{\A^{1/2} w_{tt}(\tau)}^2 + \hnorm{\A^{1/2} w_t(\tau)}^2+ \hnorm{\A w(\tau)}^2  \\
& \quad +\hnorm{u_{tttt}(\tau)}^2+ \hnorm{\A u_{ttt}(\tau)}^2 + \hnorm{\A^{1/2} u_{ttt}(\tau)}^2 + \hnorm{\A^{3/2} u_{tt}(\tau)}^2\\[3pt]
& \quad + \hnorm{\A u_{tt}(\tau)}^2 + \hnorm{\A^2 u_t(\tau)}^2 + \hnorm{\A u_t(\tau)}^2 + \hnorm{\A^2 u(\tau)}^2 \big\}\, d\tau\\
& \leq c_1 \left( \cE[u](0) + \int_0^t \left\{ \hnorm{f(\tau)}^2 + \hnorm{f_t (\tau)}^2 \right\} d\tau \right)
\end{align*}
where $b_1>0$ is sufficiently small and $c_1>0$ is sufficiently large. Defining
\begin{equation}
k[u](t) = \hnorm{u_{tttt}(t)}^2 + \hnorm{\A u_{ttt}(t)}^2 + \hnorm{\A^{3/2} u_{tt}(t)}^2 + \hnorm{\A^2 u_t(t)}^2 + \hnorm{\A^2 u(t)}^2
\end{equation}
and observing that, by Poincar\'e's inequality, we have $E_2[u] \leq (C_{H^1,L^2})^2 k[u]$ yields our next intermediate result.
\begin{lemma} Provided $f\in H^1(0,T;\h)$, any solution $u$ of $D_w D_h u = -f$ satisfies
\begin{equation}
\label{estu1}
\cE[u](t) +  \int_0^t  \cE[u](\tau) + k[u](\tau)\, d\tau \\
\leq \hat{c}_2 \left(\cE[u](0) + \int_0^t  \hnorm{f(\tau)}^2+ \hnorm{f_t(\tau)}^2 \, d\tau \right)
\end{equation}
for $t \in (0,T)$ and some $\hat{c}_2>0$ sufficiently large.
\end{lemma}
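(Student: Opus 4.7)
The plan is to read off the claim directly from the intermediate estimate displayed just above the lemma. Writing $I(\tau)$ for the bracketed expression appearing in the integral on the left-hand side of that inequality (so that the estimate reads $\cE[u](t) + \hat{b}_1 \int_0^t I(\tau)\, d\tau \leq c_1\bigl(\cE[u](0) + \int_0^t (\hnorm{f(\tau)}^2 + \hnorm{f_t(\tau)}^2)\, d\tau\bigr)$), it suffices to verify a pointwise bound
\begin{equation*}
\cE[u](\tau) + k[u](\tau) \leq C\, I(\tau) \qquad \text{for some constant } C>0,
\end{equation*}
for then the desired inequality follows with $\hat{c}_2 = c_1 \max\{1, C/\hat{b}_1\}$.

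First I would inspect $E_1[w](\tau)$ and $k[u](\tau)$: all three summands of $E_1[w]$ (namely $\hnorm{\A^{1/2} w_{tt}}^2$, $\hnorm{\A^{1/2} w_t}^2$, $\hnorm{\A w}^2$) and all five summands of $k[u]$ (namely $\hnorm{u_{tttt}}^2$, $\hnorm{\A u_{ttt}}^2$, $\hnorm{\A^{3/2} u_{tt}}^2$, $\hnorm{\A^2 u_t}^2$, and $\hnorm{\A^2 u}^2$) occur verbatim among the summands of $I(\tau)$. Of the four summands making up $E_2[u]$, two further ones, $\hnorm{\A^{1/2} u_{ttt}}^2$ and $\hnorm{\A u_{tt}}^2$, also appear directly in $I(\tau)$.

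The only terms of $\cE[u](\tau)$ not present verbatim in $I(\tau)$ are the two summands $\hnorm{\A^{3/2} u_t}^2$ and $\hnorm{\A^{3/2} u}^2$ of $E_2[u]$. I would dominate these using the embedding \eqref{Poincare} with $\nu = \tfrac{1}{2}$ applied to $v=\A^{3/2} u_t$ and $v=\A^{3/2} u$:
\begin{equation*}
\hnorm{\A^{3/2} u_t}^2 \leq \bigl(C_{\D(\A^{1/2}),\h}\bigr)^2 \hnorm{\A^2 u_t}^2, \qquad \hnorm{\A^{3/2} u}^2 \leq \bigl(C_{\D(\A^{1/2}),\h}\bigr)^2 \hnorm{\A^2 u}^2,
\end{equation*}
and the right-hand sides again figure among the summands of $I(\tau)$. (Equivalently, one may quote the observation $E_2[u] \leq (C_{H^1,L^2})^2 k[u]$ recorded just before the statement, which covers both missing terms at once.)

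No genuine obstacle remains: the algebraically delicate work of interchanging derivatives in \eqref{IBVP:abstract:interchange}, applying Lemma \ref{lem:est1} to $w$, and invoking the heat identity \eqref{id:heat1} four times has already been carried out in the paragraph preceding the statement. What is left is a term-by-term bookkeeping against $I(\tau)$, a single downgrading of two $\A^{3/2}$-terms via \eqref{Poincare}, and a choice of $\hat{c}_2$ sufficiently large to absorb the resulting numerical factors.
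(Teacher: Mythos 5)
Your proposal is correct and follows essentially the same route as the paper: the paper likewise reads the lemma off the displayed intermediate estimate, noting that the summands of $E_1[w]$ and $k[u]$ appear verbatim in the integrand and invoking the observation $E_2[u]\leq (C_{H^1,L^2})^2\,k[u]$ (Poincar\'e, i.e.\ \eqref{Poincare} with $\nu=\tfrac12$) to dominate the two remaining $\A^{3/2}$-terms, before absorbing $\hat b_1$ and the embedding constants into $\hat c_2$. Your explicit choice $\hat c_2=c_1\max\{1,C/\hat b_1\}$ is a harmless refinement of the paper's ``sufficiently large'' constant.
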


\begin{proposition}
\label{prop:linear}
Suppose that  $T$ is either finite or infinite. Suppose $f \in H^1(0,T;L_2(\Om))$ and $u_0 \in H^4(\Om)$, $u_1 \in H^3(\Om)$, $u_2 \in H^3(\Om)$ 
and $u_{ttt}(0) \in H^1(\Om)$, where 
\begin{equation*}
u_{ttt}(0)= f(0) + (a+b) \Delta u_{2} + c^2 \Delta u_{1} - ab \Delta^2 u_{1} - ac^2 \Delta^2 u_0
\end{equation*}
and let $u_0 |_\Gamma = \Delta u_0 |_\Gamma = u_1 |_\Gamma = \Delta u_1 |_\Gamma = u_2 |_\Gamma
= \Delta u_2 |_\Gamma = u_{ttt}(0) |_\Gamma =0$ in the sense of traces. 
Then \eqref{IBVP:Dir:hom:lin} has a unique solution 
\begin{align*}
u \in \mathcal{V}=~ & L_\infty(0,T;H^4(\Om)) \cap H^1(0,T; H^4(\Om)) \cap W_\infty^2(0,T;H^3(\Om)) \cap H^2(0,T;H^3(\Om)) \\
		&\cap W_\infty^3(0,T;H^1(\Om)) \cap  H^3(0,T;H^2(\Om)) \cap H^4(0,T;L_2(\Om)),
\end{align*} 
with $u |_\Gamma = \Delta u |_\Gamma = u_t |_\Gamma = \Delta u_t |_\Gamma = u_{tt} |_\Gamma
= \Delta u_{tt} |_\Gamma = u_{ttt} |_\Gamma =0$ in the trace sense.
\end{proposition}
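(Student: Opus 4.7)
The plan is to combine a Faedo--Galerkin approximation with the a priori estimate \eqref{estu1} and pass to the limit. Let $\{\phi_n\}_{n\in\N}$ be an orthonormal basis of $L_2(\Om)$ consisting of eigenfunctions of $\A = -\Delta$ (with homogeneous Dirichlet boundary conditions) with eigenvalues $0 < \lam_1 \leq \lam_2 \leq \ldots$. Because $\Delta^k \phi_n = (-\lam_n)^k \phi_n$ vanishes on $\Gamma$ whenever $\phi_n$ does, each $\phi_n$ lies in $\D(\A^k)$ for every $k \in \N$; in particular, finite linear combinations of these eigenfunctions automatically satisfy every higher-order Dirichlet compatibility condition appearing in the statement.

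I would seek $u_N(t) = \sum_{n=1}^N c_n^N(t)\phi_n$ by projecting \eqref{IBVP:Dir:hom:lin} onto $V_N := \mathrm{span}\{\phi_1,\dots,\phi_N\}$ with initial data given by the orthogonal projections of $u_0,u_1,u_2$ onto $V_N$. This produces a decoupled linear third-order ODE system with constant coefficients for the $c_n^N$ and $H^1$-in-$t$ right-hand side, so each $u_N$ is globally defined and $C^\infty$ in time. Since $u_N$ is smooth in both variables and satisfies every boundary condition needed for the integration-by-parts manipulations of Section~\ref{sec:energy}, estimate \eqref{estu1} applies verbatim to $u_N$ with a constant $\hat c_2$ independent of $N$ and of $t$. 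The hypotheses on $u_0,u_1,u_2$ and the algebraically defined $u_{ttt}(0)$ are precisely those needed to place these data in $\D(\A^2)$, $\D(\A^{3/2})$, $\D(\A^{3/2})$ and $\D(\A^{1/2})$ respectively, so that $\cE[u_N](0)$ is uniformly bounded in $N$ and converges to $\cE[u](0)$; combined with $f \in H^1(0,T;L_2(\Om))$, this yields a bound on $\cE[u_N](t) + \int_0^t k[u_N](\tau)\,d\tau$ uniform in $N$ and $t$, whether $T$ is finite or infinite.

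Setting $w_N := \partial_t u_N - a\Delta u_N$, these bounds translate into $w_N \in L_\infty(0,T;H^2(\Om))$, $\partial_t w_N,\partial_t^2 w_N \in L_\infty(0,T;H^1(\Om))$, together with $u_N,\partial_t u_N \in L_\infty(0,T;H^3(\Om))$, $\partial_t^2 u_N \in L_\infty(0,T;H^2(\Om))$ and $\partial_t^3 u_N \in L_\infty(0,T;H^1(\Om))$. Elliptic regularity for the Dirichlet Laplacian applied to $-a\Delta u_N = w_N - \partial_t u_N$ (whose right-hand side sits in $L_\infty(H^2)$) upgrades $u_N$ to $L_\infty(H^4)$; applied to the twice time-differentiated identity $-a\Delta \partial_t^2 u_N = \partial_t^2 w_N - \partial_t^3 u_N$ it upgrades $\partial_t^2 u_N$ to $L_\infty(H^3)$. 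The $L^2$-in-time components of $\mathcal{V}$ are supplied directly by the integrated quantity $\int_0^t k[u_N]$.

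Weak-$*$ and weak compactness then yield a subsequential limit $u \in \mathcal{V}$; linearity makes the passage to the limit in the PDE immediate, the boundary conditions are preserved through membership in $L_\infty(0,T;\D(\A^2))$, and the initial conditions are preserved via the embedding $H^1(0,T;X) \hookrightarrow C([0,T];X)$ applied to $u$, $\partial_t u$ and $\partial_t^2 u$. Uniqueness follows from \eqref{estu1} applied to the difference $\tilde u$ of two solutions, which satisfies $D_w D_h \tilde u = 0$ with vanishing initial data and therefore has $\cE[\tilde u] \equiv 0$. The main technical obstacle I expect is the strong convergence $\cE[u_N](0) \to \cE[u](0)$ of the initial energies, which demands convergence of the Galerkin projections in several fractional-power norms simultaneously; the fact that each $\phi_n$ lies in $\D(\A^k)$ for all $k$ together with the stated compatibility conditions placing the initial data in the correct fractional-power domains is what makes this step go through.
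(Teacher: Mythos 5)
Your proposal is correct, and its core is the same as the paper's: everything rests on the energy estimate \eqref{estu1}, from which the membership of $u$ in each factor of $\mathcal{V}$ is read off exactly as you do (the $L_\infty$-in-time factors from the boundedness of $\cE[u](t)$ plus elliptic regularity applied to $w=u_t-a\Delta u$ and its time derivatives, the $L_2$-in-time factors from $\int_0^t \cE[u]+k[u]\,d\tau<\infty$), and uniqueness follows by applying \eqref{estu1} to the difference of two solutions with $f=0$ and zero data. Where you genuinely go beyond the paper is the existence step: the paper's proof tacitly assumes a solution to which \eqref{estu1} applies and never constructs one under the hypothesis $f\in H^1(0,T;L_2(\Om))$ (the semigroup result, Corollary \ref{cor:homsemigroup}, requires H\"older continuity of $f$ and yields a different regularity class), whereas your Faedo--Galerkin scheme on the Dirichlet eigenfunctions actually produces the solution and simultaneously justifies the formal integration-by-parts manipulations behind \eqref{id:heat1} and \eqref{estu1}, since the approximants lie in $\D(\A^k)$ for all $k$. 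Two small corrections: with $f$ merely $H^1$ in time the Galerkin coefficients are not $C^\infty$ but only $W_2^4$ in time ($c_n''''\in L_2$), which is exactly enough for the term $\hnorm{u_{tttt}}^2$ in $k[u]$; and the convergence $\cE[u_N](0)\to\cE[u](0)$ that you single out as the main obstacle is in fact painless, because the spectral projections $P_N$ commute with every fractional power $\A^{\theta}$ and are contractions on each $\D(\A^{\theta})$, so $P_N v\to v$ in $\D(\A^{\theta})$ whenever $v\in\D(\A^{\theta})$, and $u_{N,ttt}(0)=P_N u_{ttt}(0)$ by the compatibility formula. Neither point affects the validity of the argument.
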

\begin{proof}
Since the regularity of the initial values implies that $\cE[u](0)$ is finite and moreover, $f \in H^1(0,T;L_2(\Om))$, we conclude that
the right-hand side of \eqref{estu1} is bounded. Hence, from the fact that $\cE[u](t)$ is bounded for all $t \in (0,T)$, we obtain 
$$w \in L_\infty (0,T;H^2(\Om)) \qquad \text{and} \qquad w_t, w_{tt} \in L_\infty(0,T;H^1(\Om))$$ as well as 
$$u, u_t \in L_\infty(0,T;H^3(\Om)), \quad u_{tt} \in L_\infty(0,T;H^2(\Om)) \quad \text{and}\quad u_{ttt} \in L_\infty(0,T;H^1(\Om)).$$ This,
by $w= a\Delta u - u_t$, implies $$u \in L_\infty(0,T; H^4(\Om)) \cap W_\infty^2(0,T; H^3(\Om)) \cap W_\infty^3(0,T; H^1(\Om)).$$
The fact that $\int_0^t  \cE[u](\tau) d\tau$ is finite provides us with $$u \in L_2(0,T;H^4(\Om)) \cap H^2(0,T;H^3(\Om)) \cap H^3(0,T;H^1(\Om)).$$ From 
the boundedness of $\int_0^t k[u](\tau)\, d\tau$ we get $u, u_t \in L_2(0,T;H^4(\Om))$, $u_{tt} \in L_2(0,T;H^3(\Om))$, $u_{ttt} \in L_2(0,T;H^2(\Om))$
and $u_{tttt} \in L_2(0,T;L_2(\Om))$. Hence the boundedness of the integral on the left hand side gives 
$$u \in H^1(0,T; H^4(\Om)) \cap H^2(0,T;H^3(\Om)) \cap H^3(0,T;H^2(\Om)) \cap H^4(0,T;L_2(\Om)).$$

Finally, there exists at most one solution. This can be seen by considering two solutions of \eqref{IBVP:Dir:hom:lin}. Their difference $\hat{u}$
solves \eqref{IBVP:Dir:hom:lin} with $f=0$ and $u_0 = u_1 = u_2 =0$. According to \eqref{estu1} we then have
$E[\hat u](t) +  \int_0^t  E[\hat u](\tau) + k[\hat u](\tau)\, d\tau \leq 0$, hence $\hat{u}=0$ and we are done. 
\end{proof}

\section{Well-posedness and exponential decay} 
\label{sec:wp}
In the present section we will show well-posedness of \eqref{IBVP:Dir:hom}, that is, existence of a unique solution 
$u(t,x)$ of \eqref{IBVP:Dir:hom} depending continuously on the initial data $u_0$, $u_1$ and $u_2$, where $(t,x) \in (0,T) \times \Om$ for
some finite $T>0$ or for infinite $T$. Our plan is to employ Proposition \ref{prop:linear} in a fixed-point argument. 
To this end, we use the space 
\begin{align*}
\mathcal{V} &= L_\infty(0,T;H^4(\Om)) \cap H^1(0,T; H^4(\Om)) \cap W_\infty^2(0,T;H^3(\Om)) \cap H^2(0,T;H^3(\Om))\\
& \quad \cap W_\infty^3(0,T;H^1(\Om)) \cap H^3(0,T;H^2(\Om)) \cap H^4(0,T;L_2(\Om))
\end{align*} 
endowed with the norm 
\begin{align*}
\norm{\cdot}_\mathcal{V}^2 &= \norm{\cdot}_{L_\infty(0,T;H^4)}^2 +  \norm{\cdot}_{H^1(0,T;H^4)}^2 + \norm{\cdot}_{W_\infty^2(0,T;H^3)}^2 + \norm{\cdot}_{H^2(0,T;H^3)}^2 \\
& \quad + \norm{\cdot}_{W_\infty^3(0,T;H^1)}^2+ \norm{\cdot}_{H^3(0,T;H^2)}^2 + \norm{\cdot}_{H^4(0,T;L_2)}^2
\end{align*}
and apply the Banach fixed-point theorem to the map 
\begin{equation}
\begin{aligned}
\mathcal{T}: \mathcal{W} &\rightarrow \mathcal{V},\\
\varphi &\mapsto u
\end{aligned}
\end{equation}
where $\mathcal{W}\subset \mathcal{V}$ endowed with the norm $\norm{\cdot}_\mathcal{V}$ is given by 
\begin{equation*}
\mathcal{W}=  \{v \in \mathcal{V} \colon  \|v\|_{\tilde{\mathcal{V}}}^2 \leq \bar{a}\}
\end{equation*}
with
\begin{align*}
    \|v\|_{\tilde{\mathcal{V}}}^2 &= \max \{  \|v_{tttt}\|_{L_2(0,T;L_2)}^2,   \|v_{ttt}\|_{L_2(0,T;H^1)}^2,  \|v_{tt}\|_{L_2(0,T;H^1)}^2,  \|v_{tt}\|_{L_\infty(0,T;H^2)}^2, \\  
&  \qquad \qquad \|v_t\|_{L_2(0,T;H^1)}^2,  \|v_t\|_{L_\infty(0,T;H^3)}^2, \|v\|_{L_\infty(0,T;H^3)}^2 \}
\end{align*}
and $\bar{a}$ sufficiently small and where $u$ is a solution of
\begin{equation}
\label{IBVP:Dir:hom:phi}
\begin{cases}
\begin{aligned}
(a\Delta - \partial_t)(u_{tt}-b\Delta u_t - c^2 \Delta u)&= (k (\varphi_t)^2+s|\nabla \varphi|^2)_{tt}		&& \text{in } (0,T) \times \Omega,\\
(u,u_t,u_{tt})&=(u_0, u_1, u_2)														&& \text{on } \{t=0\}\times\Omega,\\
(u, \Delta u)&= (0,0)																&& \text{on } [0,T) \times \Gamma.
\end{aligned}
\end{cases}
\end{equation}

\begin{theorem}[Well-posedness] 
\label{thm:wp}
Suppose $u_0 \in H^4(\Om)$, $u_1 \in H^3(\Om)$ with $\|u_1\|_{L_\infty} < (2k)^{-1}$, $u_2 \in H^3(\Om)$ and $u_{ttt}(0) \in H^1(\Om)$, where 
\begin{equation*}
u_{ttt}(0) = (1-2k u_1)^{-1} [ (a+b) \Delta u_2 -ac^2\Delta^2 u_0 +c^2  \Delta \psi_1 - ab  \Delta^2 u_1 + 2k (u_2)^2 + 2 s|\nabla u_1|^2 + 2s\nabla u_2 \nabla u_0 ]
\end{equation*} 
such that $u_0 |_\Gamma = \Delta u_0 |_\Gamma = u_1 |_\Gamma = \Delta u_1 |_\Gamma = u_2 |_\Gamma = \Delta u_2 |_\Gamma = u_{ttt}(0) |_\Gamma =0$.
There exist $\kappa >0$ and $\bar{a}>0$ sufficiently small such that, if 
\begin{equation}
\label{thm:wpcond}
\norm{u_0}_{H^4} + \norm{u_1}_{H^3} +\norm{u_2}_{H^2} + \norm{u_{ttt}(0)}_{H^1} \leq \kappa,
\end{equation}
then for any $T>0$ there exists a unique solution 
\begin{equation}\label{abar}
u \in \mathcal{V} \quad  \text{ with }\quad  \|u \|_{\tilde{\mathcal{V}}}^2 \leq \bar{a}
\end{equation}
satisfying $u |_\Gamma = \Delta u |_\Gamma = u_t |_\Gamma = \Delta u_t |_\Gamma = u_{tt} |_\Gamma = \Delta u_{tt} |_\Gamma = u_{ttt} |_\Gamma =0$.
The solution depends continously on the initial data $u_0$, $u_1$, $u_2$ and $u_{ttt}(0)$ with respect to the topology induced by $\|\cdot \|_\mathcal{V}$.
\end{theorem}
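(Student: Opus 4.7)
The plan is to apply Banach's fixed-point theorem to the map $\mathcal{T}$. Since $\mathcal{T}(\varphi)$ is defined as the solution of a problem that is linear in $u$, with right-hand side $f^\varphi := (k(\varphi_t)^2 + s|\nabla \varphi|^2)_{tt}$, Proposition \ref{prop:linear} supplies existence, uniqueness and the desired regularity once $f^\varphi \in H^1(0,T;L_2(\Om))$ and the prescribed initial data are compatible with the linear problem. The restriction of $\mathcal{W}$ to $\varphi$'s sharing the given initial traces $u_0, u_1, u_2$ and $u_{ttt}(0)$ is understood throughout; the formula for $u_{ttt}(0)$ in the theorem is exactly what makes the compatibility at $t=0$ hold for the nonlinear PDE, and then also for the linear subproblem, since $\varphi$ matches $u$ at $t=0$. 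The remaining work is to check (a) the nonlinear estimate $\|f^\varphi\|_{H^1(0,T;L_2(\Om))}^2 \leq C\|\varphi\|_{\tilde{\mathcal{V}}}^4$, (b) the self-mapping property $\mathcal{T}(\mathcal{W}) \subseteq \mathcal{W}$, and (c) the contraction property of $\mathcal{T}$ in $\|\cdot\|_\mathcal{V}$ on $\mathcal{W}$.

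For (a) I would expand
\begin{equation*}
f^\varphi = 2k\bigl(\varphi_{tt}^2 + \varphi_t \varphi_{ttt}\bigr) + 2s\bigl(|\nabla \varphi_t|^2 + \nabla \varphi \cdot \nabla \varphi_{tt}\bigr),
\end{equation*}
so that $f^\varphi_t$ additionally contains $\varphi_t \varphi_{tttt}$ and $\nabla \varphi \cdot \nabla \varphi_{ttt}$. Each bilinear product is estimated by pulling out the low-order factor in $L_\infty((0,T)\times\Om)$ through $L_\infty(0,T;H^3(\Om)) \hookrightarrow L_\infty((0,T)\times\Om)$, applied to $\varphi_t$ or to a first spatial derivative of $\varphi$, while the high-order factor is placed in one of the $L_2(0,T;\cdot)$-norms comprising $\tilde{\mathcal{V}}$. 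The $L_\infty(0,T;H^3)$-bounds on $\varphi$ and $\varphi_t$ and the $L_\infty(0,T;H^2)$-bound on $\varphi_{tt}$ that appear explicitly in $\|\cdot\|_{\tilde{\mathcal{V}}}$ are tailored for precisely this task, and lead to the schematic quadratic bound $\|f^\varphi\|_{H^1(0,T;L_2)}^2 \leq C\|\varphi\|_{\tilde{\mathcal{V}}}^4 \leq C\bar{a}^2$.

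Inserting this into the energy estimate \eqref{estu1} yields $\|u\|_{\tilde{\mathcal{V}}}^2 \leq C_1(\cE[u](0) + \|f^\varphi\|_{H^1(0,T;L_2)}^2) \leq C_2(\kappa^2 + \bar{a}^2)$, where the initial-energy term is controlled by $\kappa^2$ via \eqref{thm:wpcond}. Choosing $\bar{a}$ small enough that $C_2 \bar{a}^2 \leq \bar{a}/2$ and then $\kappa$ small enough that $C_2 \kappa^2 \leq \bar{a}/2$ establishes (b). For (c), with $\varphi^{(1)}, \varphi^{(2)} \in \mathcal{W}$ and $\hat u := \mathcal{T}(\varphi^{(1)}) - \mathcal{T}(\varphi^{(2)})$ solving the linear problem with zero initial data and right-hand side $f^{\varphi^{(1)}} - f^{\varphi^{(2)}}$, the latter is a sum of bilinear terms each carrying one factor from $\varphi^{(1)} - \varphi^{(2)}$ and one from $\varphi^{(1)}$ or $\varphi^{(2)}$. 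The same estimation scheme as in (a) then yields $\|\hat u\|_\mathcal{V} \leq C_3 \bar{a}^{1/2} \|\varphi^{(1)} - \varphi^{(2)}\|_\mathcal{V}$, which is a strict contraction for $\bar{a}$ sufficiently small.

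The unique fixed point of $\mathcal{T}$ is the sought solution $u$. The bound $\|u_t\|_{L_\infty((0,T)\times\Om)} < (2k)^{-1}$ ruling out degeneracy then holds a posteriori via $u_t \in L_\infty(0,T;H^3(\Om))$, the embedding $H^3(\Om) \hookrightarrow L_\infty(\Om)$, the smallness of $\bar{a}$, and the smallness of $\|u_1\|_{L_\infty}$ imposed at $t=0$. Continuous dependence on the initial data is obtained by running the same contraction estimate on the difference of two nonlinear solutions with distinct data, which satisfies a linear problem whose source is $f^{u^{(1)}} - f^{u^{(2)}}$ and whose initial values are the differences of the data, yielding a Lipschitz estimate in $\|\cdot\|_\mathcal{V}$. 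The main obstacle I expect is the bookkeeping in step (a) and its difference analog: one must verify that \emph{every} bilinear product generated by the second and third time derivatives of $k(\varphi_t)^2 + s|\nabla \varphi|^2$ admits a factor-splitting covered by exactly the seven norms defining $\|\cdot\|_{\tilde{\mathcal{V}}}$, so that the powers of $\bar{a}$ produced by the nonlinear terms dominate the fixed constants from \eqref{estu1} and the embeddings, and the fixed-point loop truly closes.
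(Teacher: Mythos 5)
Your proposal is correct and follows essentially the same route as the paper: Banach's fixed-point theorem on the ball $\mathcal{W}$, with the self-mapping property secured by the quadratic estimate $\|f^\varphi\|_{H^1(0,T;L_2)}^2 \lesssim \|\varphi\|_{\tilde{\mathcal{V}}}^4 \leq \bar{a}^2$ fed into the linear energy estimate \eqref{estu1}, contraction obtained from the bilinear structure of $f^{\varphi^{(1)}}-f^{\varphi^{(2)}}$ giving a factor $\bar{a}^{1/2}$, and continuous dependence by the same difference estimate applied to two nonlinear solutions. The term-by-term bookkeeping you flag as the main obstacle is exactly what the paper's Lemmas \ref{lem:estf} and \ref{lem:hatf} carry out, using precisely the factor-splittings you describe.
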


\begin{remark} 
Note that $\|u_1\|_{L_\infty} < (2k)^{-1}$ can be achieved via the smallness condition \eqref{thm:wpcond} together with the embedding $H^3(\Om) \hookrightarrow L_\infty(\Om)$. Degeneracy is avoided, since $\|u_t\|_{L_\infty((0,T)\times \Om)}$ can be bounded away from $-(2k)^{-1}$ again via the embedding $H^3(\Om) \hookrightarrow L_\infty(\Om)$ and choosing $\bar{a}$ sufficiently small.

Moreover, note that $\|\cdot\|_{\tilde{\mathcal{V}}}$ is weaker than $\|\cdot \|_\mathcal{V}$ and the embedding constant for $\mathcal{V} \hookrightarrow \tilde{\mathcal{V}}$ is just $1$, in particular this constant is independent of $T$ and $\Om$.
\end{remark}

The proof mainly boils down to the verification of the assumptions of Banach's Fixed-Point Theorem. We show that (1) $\mathcal{T}$is a self-mapping on $\mathcal{W}$, that (2) $\mathcal{W}$ is closed, that (3) $\mathcal{T}\colon \mathcal{W} \rightarrow \mathcal{W}$ is a contraction and that (4) the solution depends continuously on the initial data. 
At this stage, let us point out that the smallness condition \eqref{thm:wpcond} is essential for the application of Banach's Fixed-Point Theorem.
\vspace{3mm}\newline
{\it Step 1: $\mathcal{T}$ is a self-mapping on $\mathcal{W}$.} The self-mapping property is achieved by only allowing for sufficiently small initial data.
\begin{lemma}\label{lem:estf} 
Suppose $\varphi \in \mathcal{W}$. Then $f=(k (\varphi_t)^2 +s |\nabla \varphi |^2)_{tt} \in H^1(0,T;L_2(\Om))$ 
and we have the estimate
\begin{equation} \label{estf}
\norm{f}_{L_2(0,T;L_2)}^2 + \norm{f_t}_{L_2(0,T;L_2)}^2 \leq c \|\varphi\|_{\tilde{\mathcal{V}}}^4 \leq c \bar{a}^2
\end{equation}
for some constant $c>0$.
\end{lemma}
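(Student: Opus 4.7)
My plan is to simply expand the two time derivatives, and then bound each resulting product term via Hölder's inequality combined with the embedding $H^2(\Om) \hookrightarrow L_\infty(\Om)$ (which is available because $n\leq 3$). The heart of the argument is bookkeeping: every factor that needs to be placed in $L_\infty$ will be controlled by one of the $L_\infty(0,T;H^k)$-norms appearing in $\|\cdot\|_{\tilde{\mathcal{V}}}$, while the remaining factor will be controlled by an $L_2(0,T;L_2)$ or $L_2(0,T;H^1)$ norm also appearing in $\|\cdot\|_{\tilde{\mathcal{V}}}$.

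The first step is to compute
\begin{align*}
f &= 2k(\varphi_{tt})^2 + 2k\varphi_t\varphi_{ttt} + 2s|\nabla\varphi_t|^2 + 2s\nabla\varphi\cdot\nabla\varphi_{tt}, \\
f_t &= 6k\varphi_{tt}\varphi_{ttt} + 2k\varphi_t\varphi_{tttt} + 6s\nabla\varphi_t\cdot\nabla\varphi_{tt} + 2s\nabla\varphi\cdot\nabla\varphi_{ttt}.
\end{align*}
For each of these eight bilinear terms, I split in $L^\infty\cdot L^2$ fashion. For example,
\[
\int_0^T\!\!\int_\Om |\varphi_t \varphi_{ttt}|^2 \,dx\,dt
\leq \|\varphi_t\|_{L_\infty(0,T;L_\infty)}^2\, \|\varphi_{ttt}\|_{L_2(0,T;L_2)}^2
\leq C\|\varphi_t\|_{L_\infty(0,T;H^3)}^2\, \|\varphi_{ttt}\|_{L_2(0,T;H^1)}^2,
\]
so this term is bounded by $C\|\varphi\|_{\tilde{\mathcal{V}}}^4$. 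Treating $\varphi_t\varphi_{tttt}$ analogously, one uses $\|\varphi_t\|_{L_\infty(0,T;H^3)}^2$ together with $\|\varphi_{tttt}\|_{L_2(0,T;L_2)}^2$. The purely-spatial-gradient terms $|\nabla\varphi_t|^2$ and $\nabla\varphi_t\cdot\nabla\varphi_{tt}$ are handled by placing $\nabla\varphi_t$ in $L_\infty(0,T;L_\infty)$ via $H^2(\Om)\hookrightarrow L_\infty(\Om)$, which costs one factor of $\|\varphi_t\|_{L_\infty(0,T;H^3)}$, and keeping the partner in $L_2(0,T;L_2)$ via $\|\varphi_t\|_{L_2(0,T;H^1)}$ or $\|\varphi_{tt}\|_{L_2(0,T;H^1)}$. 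The terms $(\varphi_{tt})^2$ and $\varphi_{tt}\varphi_{ttt}$ are controlled using $\|\varphi_{tt}\|_{L_\infty(0,T;H^2)}$ paired with $\|\varphi_{tt}\|_{L_2(0,T;H^1)}$ and $\|\varphi_{ttt}\|_{L_2(0,T;H^1)}$ respectively. Finally, the mixed term $\nabla\varphi\cdot\nabla\varphi_{ttt}$ is bounded via $\|\varphi\|_{L_\infty(0,T;H^3)}\cdot\|\varphi_{ttt}\|_{L_2(0,T;H^1)}$.

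Summing the eight contributions yields the claimed bound
\[
\|f\|_{L_2(0,T;L_2)}^2 + \|f_t\|_{L_2(0,T;L_2)}^2 \leq c\,\|\varphi\|_{\tilde{\mathcal{V}}}^4 \leq c\,\bar{a}^2,
\]
with the constant $c$ depending only on $k$, $s$, and the embedding constants $C_{H^2,L_\infty}$ and $C_{H^3,L_\infty}$, but crucially not on $T$. This $T$-independence is what enables the argument to work on arbitrarily large time intervals, including $T=\infty$. The $H^1(0,T;L_2(\Om))$-regularity of $f$ is then immediate from the finiteness of both norms. The main (and indeed only) obstacle is verifying that every one of the eight terms can be paired in an $L_\infty\cdot L_2$ manner with both factors coming from the spaces embedded in $\tilde{\mathcal{V}}$; the most delicate pairing is the $\varphi_t\varphi_{tttt}$ term, which forces us to include $\|\varphi_{tttt}\|_{L_2(0,T;L_2)}$ in the definition of $\|\cdot\|_{\tilde{\mathcal{V}}}$, and this is the reason the fourth time derivative appears in the ambient space $\mathcal{V}$.
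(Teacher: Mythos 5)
Your proposal is correct and follows essentially the same route as the paper: expand $f$ and $f_t$ into the eight bilinear terms and bound each in $L_2(0,T;L_2)$ by an $L_\infty(0,T;L_\infty)\cdot L_2(0,T;L_2)$ splitting, using $H^2(\Om)\hookrightarrow L_\infty(\Om)$ and $H^3(\Om)\hookrightarrow L_\infty(\Om)$ to land every factor in one of the norms constituting $\|\cdot\|_{\tilde{\mathcal{V}}}$, with exactly the pairings the paper uses (including $\|\varphi_t\|_{L_\infty(0,T;H^3)}\cdot\|\varphi_{tttt}\|_{L_2(0,T;L_2)}$ for the most delicate term). The only cosmetic omission is that you do not explicitly list the eighth term $\nabla\varphi\cdot\nabla\varphi_{tt}$ from $f$, but it is covered verbatim by the same pairing you give for $\nabla\varphi\cdot\nabla\varphi_{ttt}$.
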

\begin{proof}
Explicitly we have $f=2k(\varphi_{tt})^2 +2k \varphi_t \varphi_{ttt} + 2s|\nabla \varphi_t|^2+ 2s\nabla \varphi \nabla \varphi_{tt}$ and
$f_t= 6k\varphi_{tt} \varphi_{ttt} +2k \varphi_t \varphi_{tttt} +6s \nabla \varphi_t \nabla \varphi_{tt}+s \nabla \varphi \nabla \varphi_{ttt}$.
We prove that, provided $\varphi \in \mathcal{W}$, $f$ and $f_t$ are both in $L_2(0,T;L_2(\Om))$ and can be estimated in terms of 
$\bar{a}^2$.

First note that, since $\varphi \in \mathcal{W}$, we have $\varphi_{tt} \in L_\infty(0,T;H^2(\Om)) \cap L_2(0,T;H^3(\Om))$.
Using the embeddings $H^2(\Om) \hookrightarrow L_\infty(\Om)$ and $H^3(\Om) \hookrightarrow L_2(\Om)$ we further conclude that
$\varphi_{tt} \in L_\infty(0,T;L_\infty(\Om)) \cap L_2(0,T;L_2(\Om))$ and since moreover, $L_\infty(0,T;L_\infty(\Om))$ 
is an ideal in the space $L_2(0,T;L_2(\Om))$ we arrive at $(\varphi_{tt})^2 \in L_2(0,T;L_2(\Om))$. We estimate
\begin{align*}
\norm{(\varphi_{tt})^2}_{L_2(0,T;L_2)}^2
& \leq  \norm{\varphi_{tt}}_{L_\infty(0,T;L_\infty)}^2 \norm{\varphi_{tt}}_{L_2(0,T;L_2)}^2\\
& \leq (C_{H^2,L_\infty})^2 \,(C_{H^1,L_2})^2 \,  \norm{\varphi_{tt}}_{L_\infty(0,T;H^2)}^2 \norm{\varphi_{tt}}_{L_2(0,T;H^1)}^2\\
& \leq (C_{H^2,L_\infty})^2 \,(C_{H^1,L_2})^2 \, \bar{a}^2.
\end{align*}
Furthermore, $\varphi \in \mathcal{W}$ implies $\varphi_t \in L_\infty(0,T;H^3(\Om)) \hookrightarrow L_\infty(0,T;L_\infty(\Om))$ and moreover, we have that 
$\varphi_{ttt} \in L_2(0,T;H^2(\Om)) \hookrightarrow L_2(0,T;L_2(\Om))$. Therefore, $\varphi_t \varphi_{ttt} \in L_2(0,T;L_2(\Om))$ and 
\begin{align*}
\norm{\varphi_t \varphi_{ttt} }_{L_2(0,T;L_2)}^2&
\leq  \norm{\varphi_{t}}_{L_\infty(0,T;L_\infty)}^2 \norm{\varphi_{ttt}}_{L_2(0,T;L_2)}^2\\
&\leq (C_{H^3,L_\infty})^2 \, (C_{H^1,L_2})^2 \,  \norm{\varphi_{t}}_{L_\infty(0,T;H^3)}^2 \norm{\varphi_{ttt}}_{L_2(0,T;H^1)}^2\\
&\leq (C_{H^3,L_\infty})^2 \, (C_{H^1,L_2})^2 \, \bar{a}^2.
\end{align*}
For the third term of $f$ note that $\nabla \varphi_t \in L_\infty(0,T;H^2(\Om)) \cap L_2(0,T;H^3(\Om))$ which via 
$H^2(\Om) \hookrightarrow L_\infty(\Om)$ and $H^3(\Om) \hookrightarrow L_2(\Om)$  gives us
$|\nabla \varphi_t|^2 \in L_2(0,T;L_2(\Om))$ and 
\begin{align*}
\norm{\nabla \varphi_t \nabla \varphi_{t} }_{L_2(0,T;L_2)}^2 
&\leq  (C_{H^2,L_\infty})^2 \, \norm{\varphi_t}_{L_\infty(0,T;H^3)}^2 \norm{\varphi_t}_{L_2(0,T;H^1)}^2 \\
&\leq (C_{H^2,L_\infty})^2 \, \bar{a}^2.
\end{align*}
Using the embeddings $H^3(\Om) \hookrightarrow L_\infty(\Om)$ and $H^2(\Om) \hookrightarrow L_2(\Om)$ we see that 
$\nabla \varphi \nabla \varphi_{tt} \in L_2(0,T;L_2(\Om))$ and estimate
\begin{align*}
\norm{\nabla \varphi \nabla \varphi_{tt} }_{L_2(0,T;L_2)}^2 
& \leq (C_{H^2,L_\infty})^2 \, \norm{\varphi}_{L_\infty(0,T;H^3)}^2 \, \norm{\varphi_{tt}}_{L_2(0,T;H^1)}^2\\
& \leq (C_{H^2,L_\infty})^2 \, \bar{a}^2.
\end{align*}
Now we proceed with treating the terms contained in $f_t$. For the first one we have $\varphi_{tt} \varphi_{ttt} \in L_2(0,T;L_2(\Om))$,
where we used $\varphi_{tt} \in L_\infty(0,T;H^2(\Om)) \hookrightarrow L_\infty(0,T;L_\infty(\Om))$ and 
$\varphi_{ttt} \in L_2(0,T;H^2(\Om)) \hookrightarrow L_2(0,T;L_2(\Om))$. As a consequence,
\begin{align*}
\norm{\varphi_{tt} \varphi_{ttt} }_{L_2(0,T;L_2)}^2 
&\leq (C_{H^2,L_\infty})^2 \, (C_{H^1,L_2})^2 \,\norm{\varphi_{tt}}_{L_\infty(0,T;H^2)}^2 \norm{\varphi_{ttt}}_{L_2(0,T;H^1)}^2\\
&\leq (C_{H^2,L_\infty})^2 \,(C_{H^1,L_2})^2 \, \bar{a}^2.
\end{align*}
Moreover, from $\varphi \in \mathcal{W}$ we have $\varphi_t \in L_\infty(0,T;H^3(\Om)) \hookrightarrow L_\infty(0,T;L_\infty(\Om))$ 
as well as $\varphi_{tttt} \in L_2(0,T;L_2(\Om))$, therefore $\varphi_t \varphi_{tttt} \in L_2(0,T;L_2(\Om))$ and 
\begin{align*}
\norm{\varphi_{t} \varphi_{tttt} }_{L_2(0,T;L_2)}^2 
&\leq (C_{H^3,L_\infty})^2 \, \norm{\varphi_t}_{L_\infty(0,T;H^3)}^2 \norm{\varphi_{tttt}}_{L_2(0,T;L_2)}^2\\
&\leq (C_{H^3,L_\infty})^2 \, \bar{a}^2.
\end{align*}
Since we have $\nabla \varphi_t \in L_\infty(0,T;H^2(\Om)) \hookrightarrow L_\infty(0,T;L_\infty(\Om))$ and 
$\nabla \varphi_{tt} \in L_2(0,T;H^2(\Om)) \hookrightarrow L_2(0,T;L_2(\Om))$, the term $\nabla \varphi_t \nabla \varphi_{tt}$ 
is an element of $L_2(0,T;L_2(\Om))$ and can be estimated as
\begin{align*}
\norm{\nabla \varphi_t \nabla  \varphi_{tt} }_{L_2(0,T;L_2)}^2 
& \leq  \,(C_{H^2,L_\infty})^2 \,  \norm{\varphi_t}_{L_\infty(0,T;H^3)}^2 \,  \norm{\varphi_{tt}}_{L_2(0,T;H^1)}^2\\
& \leq  \,(C_{H^2,L_\infty})^2 \, \bar{a}^2.
\end{align*}
It is finally not surprising that also $\nabla \varphi \nabla \varphi_{ttt} \in L_2(0,T;L_2(\Om))$ and 
\begin{align*}
\norm{\nabla \varphi \nabla  \varphi_{ttt} }_{L_2(0,T;L_2)}^2 
& \leq (C_{H^2,L_\infty})^2 \, \norm{\varphi}_{L_\infty(0,T;H^3)}^2 \norm{\varphi_{ttt}}_{L_2(0,T;H^1)}^2\\
& \leq (C_{H^2,L_\infty})^2 \, \bar{a}^2,
\end{align*}
where we have have used that $\nabla \varphi \in L_\infty(0,T;H^3(\Om)) \hookrightarrow L_\infty(0,T;L_\infty(\Om))$ and 
$\nabla \varphi_{ttt} \in L_2(0,T;H^1(\Om)) \hookrightarrow L_2(0,T;L_2(\Om))$. 
Altogether, we have $f \in H^1(0,T;L_2(\Om))$ and estimate \eqref{estf} holds for some $c>0$ depending on 
the embedding constants appearing in the foregoing estimates. 
\end{proof}

We assume now $\varphi \in \mathcal{W}$ and use our energy estimate \eqref{estu1} as well as \eqref{estf} to obtain
\begin{equation*}
\cE[u](t) +  \int_0^t  \cE[u](\tau) + k[u](\tau)\, d\tau \leq \hat{c}_2 \left(\cE[u](0) + c \bar{a}^2 \right)
\end{equation*}
where we choose $\overline{a}\leq (2\hat{c}_2 c )^{-1}$. 
In the proof of Proposition \ref{prop:linear} we have already mentioned that 
\begin{equation}
\label{ce}
\cE[u](0) \leq c_e ( \norm{u_0}_{H^4}^2 + \norm{u_1}_{H^3}^2 + \norm{u_2}_{H^3}^2 + \norm{u_{ttt}(0)}_{H^1}^2)
\end{equation} 
for some constant $c_e>0$ depending on the constant of Poincar{\'e}'s inequality. Smallness of the initial data,
\begin{equation*}
\|u_0\|_{H^4}+\|u_1\|_{H^3}+\|u_2\|_{H^3}+ \| u_{ttt}(0) \|_{H^1} \leq \kappa,
\end{equation*}
ensures $\cE[u](0) \leq c_e \kappa$. Choosing $\kappa =\frac{\overline{a}}{2 \hat{c}_2 c_e}$, we finally get 
\begin{equation*}
\cE[u](t) +  \int_0^t  \cE[u](\tau) + k[u](\tau)\, d\tau \leq \bar{a}
\end{equation*}
which gives us at first $u \in \mathcal{V}$ since the left-hand side is finite (see also the proof of Proposition \ref{prop:linear}) and
moreover, $u \in \mathcal{W}$ by the definitions of $\cE[u]$ and $k[u]$. Hence $\mathcal{T}\mathcal{W}\subseteq\mathcal{W}$ 
under the above assumptions.
\vspace{5pt}\newline
{\it Step 2: $\mathcal{W}$ is a closed subset of $\mathcal{V}$.} Note that $\mathcal{W}$ is a closed ball in $\mathcal{V}$ with respect to $\|\cdot \|_{\tilde{\mathcal{V}}}$.
\vspace{5pt}\newline
{\it Step 3: $\mathcal{T}: \mathcal{W} \rightarrow \mathcal{W}$ is a contraction.}
In order to show contractivity of $\mathcal{T}: \mathcal{W} \rightarrow \mathcal{W}$, suppose $u^{(1)}$ and $u^{(2)}$ are two 
solutions of \eqref{IBVP:Dir:hom:phi} and  $u^{(1)} = \mathcal{T} \varphi^{(1)}$, $u^{(2)}=\mathcal{T} \varphi^{(2)}$. 
Then $\hat{u}=u^{(1)} -u^{(2)}$ and $\hat{\varphi} = \varphi^{(1)} - \varphi^{(2)}$ solve the equation
\begin{equation*}
\begin{cases}
\begin{aligned}
(a\Delta - \partial_t)(\hat{u}_{tt}-b\Delta \hat{u}_t - c^2 \Delta \hat{u})&= \hat{f}		&& \text{in } (0,T) \times \Omega,\\
(\hat{u},\hat{u}_t,\hat{u}_{tt})&=(0, 0, 0)												&& \text{on } \{t=0\}\times\Omega,\\
(\hat{u}, \Delta \hat{u})&= (0,0)														&& \text{on } [0,T) \times \Gamma,
\end{aligned}
\end{cases}
\end{equation*}
where 
\begin{equation}
\label{hatf}
\begin{aligned}
\hat{f}&=(k(\varphi_t^{(1)})^2 - k(\varphi_t^{(2)})^2 +s |\nabla \varphi^{(1)}|^2 - s|\nabla \varphi^{(2)}|^2)_{tt}\\
  &= (k^2(\varphi_t^{(1)} + \varphi_t^{(2)}) \hat{\varphi}_t + s^2(\nabla \varphi^{(1)} + \nabla \varphi^{(2)}) \nabla \hat{\varphi})_{tt}.
  \end{aligned}
\end{equation}
\begin{lemma} \label{lem:hatf}
Suppose $\varphi^{(1)}, \varphi^{(2)} \in \mathcal{W}$. Then $\hat{f}$ given by \eqref{hatf} is $H^1(0,T;L_2(\Om))$ and we have 
\begin{equation}\label{est:hatf}
\norm{\hat{f}}_{L^2(0,T;L_2(\Om))}^2 + \norm{\hat{f_t}}_{L^2(0,T;L_2(\Om))}^2
\leq 2 \overline{c} \bar{a} \norm{\hat{\varphi}}_{\mathcal{V}}^2 
\end{equation}
for some constant $\overline{c}>0$.
\end{lemma}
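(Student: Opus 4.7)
The plan is to follow exactly the template of the proof of Lemma \ref{lem:estf}, with the single modification that in each bilinear term one factor will carry the ``smallness'' coming from $\bar{a}$ and the other will carry the factor $\norm{\hat{\varphi}}_\mathcal{V}$. Setting $\psi := \varphi^{(1)}+\varphi^{(2)}$, the triangle inequality gives $\norm{\psi}_{\tilde{\mathcal{V}}}^2\leq 4\bar{a}$, and factoring the difference of squares (already done in \eqref{hatf}) lets us write
\begin{equation*}
\hat{f} = \partial_t^2\bigl(k\,\psi_t\,\hat{\varphi}_t + s\,\nabla\psi\cdot\nabla\hat{\varphi}\bigr).
\end{equation*}
A direct Leibniz expansion produces a finite sum of bilinear terms $A\cdot B$ in $\hat{f}$ such as $\psi_{ttt}\hat{\varphi}_t$, $\psi_{tt}\hat{\varphi}_{tt}$, $\psi_t\hat{\varphi}_{ttt}$, $\nabla\psi_{tt}\cdot\nabla\hat{\varphi}$, $\nabla\psi_t\cdot\nabla\hat{\varphi}_t$ and $\nabla\psi\cdot\nabla\hat{\varphi}_{tt}$; one further differentiation in time gives the corresponding terms for $\hat{f}_t$, with the top-order contributions being $\psi_{tttt}\hat{\varphi}_t$ and $\psi_t\hat{\varphi}_{tttt}$ (and their gradient analogues).

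For every such term I would apply H\"older in $L_2(0,T;L_2(\Om))$ with the Sobolev embeddings $H^2(\Om)\hookrightarrow L_\infty(\Om)$ and $H^3(\Om)\hookrightarrow L_\infty(\Om)$, following the exact bookkeeping used in Lemma \ref{lem:estf}. The systematic rule is to place the $\psi$-factor in the $L_\infty(0,T;L_\infty(\Om))$-slot whenever possible, controlling it by the relevant component of $\norm{\psi}_{\tilde{\mathcal{V}}}$ (for instance $\norm{\psi_t}_{L_\infty(0,T;H^3)}$, $\norm{\psi_{tt}}_{L_\infty(0,T;H^2)}$ or $\norm{\nabla\psi}_{L_\infty(0,T;H^2)}$), and to place the $\hat{\varphi}$-factor in the $L_2(0,T;L_2(\Om))$-slot, where, in contrast to Lemma \ref{lem:estf}, the full $\mathcal{V}$-norm of $\hat{\varphi}$ is available. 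Only for the top-order terms in $\hat{f}_t$ must the roles be reversed: $\psi_{tttt}\hat{\varphi}_t$ is handled by pairing $\psi_{tttt}\in L_2(0,T;L_2(\Om))$ (controlled by $\norm{\psi}_{\tilde{\mathcal{V}}}$) with $\hat{\varphi}_t\in L_\infty(0,T;H^3)\hookrightarrow L_\infty(0,T;L_\infty(\Om))$ (controlled by $\norm{\hat{\varphi}}_\mathcal{V}$), and $\psi_t\hat{\varphi}_{tttt}$ by the mirror assignment.

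Each individual term is then bounded by $C\,\norm{\psi}_{\tilde{\mathcal{V}}}^2\,\norm{\hat{\varphi}}_\mathcal{V}^2 \leq 4C\,\bar{a}\,\norm{\hat{\varphi}}_\mathcal{V}^2$; summing over the finitely many terms in $\hat{f}$ and $\hat{f}_t$ and absorbing all embedding constants into a single $\overline{c}>0$ yields \eqref{est:hatf}, and finiteness of that right-hand side also delivers $\hat{f}\in H^1(0,T;L_2(\Om))$. I expect no serious analytical difficulty here: the main task is the combinatorial bookkeeping of verifying, case by case, that for every product arising from the Leibniz expansion at least one admissible slot assignment exists that only uses norms actually controlled by $\tilde{\mathcal{V}}$ on the $\psi$-side and by $\mathcal{V}$ on the $\hat{\varphi}$-side. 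As in Lemma \ref{lem:estf}, the restriction $\dim\Om\leq 3$ enters only through the embedding $H^2(\Om)\hookrightarrow L_\infty(\Om)$, and no genuinely new analytical input beyond what is already present there is required.
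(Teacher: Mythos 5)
Your proposal is correct and follows essentially the same route as the paper: a term-by-term Leibniz expansion of $\hat{f}$ and $\hat{f}_t$, estimating each bilinear product by H\"older together with the embeddings $H^2(\Om)\hookrightarrow L_\infty(\Om)$ and $H^3(\Om)\hookrightarrow L_\infty(\Om)$, with one factor placed in a slot controlled by $\norm{\cdot}_{\tilde{\mathcal{V}}}\leq\sqrt{\bar{a}}$ and the other in a slot controlled by $\norm{\hat{\varphi}}_{\mathcal{V}}$ (the paper groups the difference of squares slightly differently instead of using $\psi=\varphi^{(1)}+\varphi^{(2)}$, which is immaterial). The only harmless imprecision is your claim that the reversed slot assignment is needed only for the top-order terms of $\hat{f}_t$: it is also needed for some terms of $\hat{f}$ itself (e.g.\ $\psi_{ttt}\hat{\varphi}_t$ and $\nabla\psi_{tt}\cdot\nabla\hat{\varphi}$, since $\psi_{ttt}$ and $\nabla\psi_{tt}$ are only controlled in $L_2$-in-time with values in $H^1(\Om)$ resp.\ $L_2(\Om)$, which does not embed into $L_\infty(\Om)$), but your stated case-by-case verification covers these and an admissible assignment exists for every term.
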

\begin{proof}
Explicitly, we have
\begin{align*}
\hat{f}&= 2k \hat{\varphi}_{tt}(\varphi_{tt}^{(1)}+\varphi_{tt}^{(2)}) + 2k \varphi_{ttt}^{(1)} \hat{\varphi}_t +  2k \varphi_{t}^{(2)} \hat{\varphi}_{ttt}\\
&\quad+2s \nabla \hat{\varphi}_t (\nabla \varphi_{t}^{(1)} + \nabla \varphi_{t}^{(2)}) +2s  \nabla \hat{\varphi}_{tt} \nabla \varphi^{(1)} +2s  \nabla \hat{\varphi} \nabla \varphi_{tt}^{(2)}
\end{align*}
and 
\begin{align*}
\hat{f}_t&= 2k ( \hat{\varphi}_{ttt} ( \varphi_{tt}^{(1)} + \varphi_{tt}^{(2)})+  \hat{\varphi}_{tt} ( \varphi_{ttt}^{(1)} + \varphi_{ttt}^{(2)})+  \varphi_{tttt}^{(1)} \hat{\varphi}_t
			+ \varphi_{ttt}^{(1)} \hat{\varphi}_{tt} +  \varphi_{tt}^{(2)}\hat{\varphi}_{ttt}+ \varphi_{t}^{(2)} \hat{\varphi}_{tttt})\\
	   & \quad + 2 s (\nabla \hat{\varphi}_{tt} \nabla(\varphi_t^{(1)}+\varphi_t^{(2)}) +  \nabla \hat{\varphi}_{t} \nabla(\varphi_{tt}^{(1)}+\varphi_{tt}^{(2)}) + 
	         \nabla \hat{\varphi}_{ttt} \nabla \varphi^{(1)})\\
						& \quad +2s(\nabla \hat{\varphi}_{tt} \nabla \varphi_t^{(1)} +   \nabla \hat{\varphi}_t \nabla \varphi_{tt}^{(2)} +
		 \nabla \hat{\varphi} \nabla \varphi_{ttt}^{(2)}).
\end{align*}
If $\varphi^{(1)}, \varphi^{(2)} \in \mathcal{W}$, then we have $\hat{\varphi} \in \mathcal{V}$. Moreover, 
$\hat{\varphi}_{tt} \in L_2(0,T;H^3(\Om)) \hookrightarrow L_2(0,T;L_2(\Om))$
and $\varphi_{tt}^{(i)} \in L_\infty(0,T;H^2(\Om)) \hookrightarrow L_\infty(0,T;L_\infty(\Om))$ for $i \in \{1,2\}$. 
Hence $\hat{\varphi}_{tt}(\varphi_{tt}^{(1)}+\varphi_{tt}^{(2)}) \in L_2(0,T;L_2(\Om))$ and 
\begin{align*}
&\norm{\hat{\varphi}_{tt}(\varphi_{tt}^{(1)}+\varphi_{tt}^{(2)})}_{L_2(0,T;L_2)}^2 \\
&\leq \norm{\hat{\varphi}_{tt}}_{L_2(0,T;L_2)}^2 (\norm{\varphi_{tt}^{(1)}}_{L_\infty(0,T;L_\infty)}+\norm{\varphi_{tt}^{(2)}}_{L_\infty(0,T;L_\infty)})^2\\
&\leq 2   \,(C_{H^1,L_2})^2 \, (C_{H^2,L_\infty})^2 \norm{\hat{\varphi}_{tt}}_{L_2(0,T;H^1)}^2 (\norm{\varphi_{tt}^{(1)}}_{L_\infty(0,T;H^2)}^2+\norm{\varphi_{tt}^{(2)}}_{L_\infty(0,T;H^2)}^2)\\
&\leq 4\,(C_{H^1,L_2})^2 \, (C_{H^2,L_\infty})^2 \bar{a} \norm{\hat{\varphi}}_{\mathcal{V}}^2.							
\end{align*}
On the example of the latter we shown how to treat the terms contained in $\hat{f}$ and $\hat{f}_t$. It can be shown analogously to the proof of Lemma 
\ref{lem:estf} that, provided $\varphi^{(1)}, \varphi^{(2)} \in \mathcal{W}$, each of the summands 
in $\hat{f}$ and $\hat{f}_t$ is contained in $L_2(0,T;L_2(\Om))$ and can (up to a constant) be estimated in terms of the right-hand side of \eqref{est:hatf}. 
\end{proof}
Now we observe that 
\begin{equation}
\label{est:contr}
 \norm{u}_\mathcal{V}^2 
\leq c_k \esssup_{t \in [0,T]} \left(\cE[u](t) + \int_0^t k[u](\tau) \, d\tau\right)
\end{equation}
for some $c_k>0$ depending on the constant $C_{H^1,L^2}$ from the Poincar{\'e} inequality. Since $\hat{u}(0)=0$, $\hat{u}_t(0)=0$ and $\hat{u}_{tt}(0)=0$ we therefore have 
by \eqref{estu1} and Lemma \ref{lem:hatf} that
\begin{align*}
\norm{\hat u}_\mathcal{V}^2 
&\leq c_k \esssup_{t \in [0,T]} \left(\cE[\hat{u}](t) + \int_0^t  \cE[\hat{u}](\tau) + k[\hat{u}](\tau)\, d\tau \right)\\
& \leq c_2 c_k \esssup_{t \in [0,T]} \left(\cE[\hat{u}](t) + \int_0^t \norm{f(\tau) }_{L_2} +  \norm{f_t(\tau) }_{L_2}\, d\tau \right)\\
& \leq 2  \hat{c}_2 c_k \overline{c} \bar{a} \norm{\hat{\varphi}}_{\mathcal{\tilde{V}}}^2 \leq 2   \hat{c}_2 c_k \overline{c} \bar{a} \norm{\hat{\varphi}}_{\mathcal{V}}^2.
\end{align*}
Choosing $\bar{a}< (2 \hat{c}_2 c_k \overline{c})^{-1}$ finally yields contractivity with respect to $\norm{\cdot}_\mathcal{V}$. 
\vspace{5mm}
\newline
\textit{Step 4: Continuous dependence of the solution on the initial data.} Let now $u$ be a solution of \eqref{IBVP:Dir:hom}
with initial data $u_0$, $u_1$, $u_2$ and let $\bar{u}$ be a solution of \eqref{IBVP:Dir:hom} with initial data $\bar{u}_0$, $\bar{u}_1$, $\bar{u}_2$.
Then $\hat{u} = u - \bar{u}$ is a solution of the initial boundary value problem 
\begin{equation*}
\begin{cases}
\begin{aligned}
(a\Delta - \partial_t)(\hat{u}_{tt}-b\Delta \hat{u}_t - c^2 \Delta \hat{u})&= f^{\hat{u}}		&& \text{in } (0,T) \times \Omega,\\
(\hat{u},\hat{u}_t,\hat{u}_{tt})&=(u_0 - \bar{u}_0, u_1 - \bar{u}_1, u_2 - \bar{u}_2)		&& \text{on } \{t=0\}\times\Omega,\\
(\hat{u}, \Delta \hat{u})&= (0,0)															&& \text{on } [0,T) \times \Gamma,
\end{aligned}
\end{cases}
\end{equation*}
where $f^{\hat{u}}=(k(u_t)^2 - k(\bar{u}_t)^2 +s |\nabla u|^2 - s|\nabla \bar{u}|^2)_{tt}$. With $\varphi^{(1)}$, $\varphi^{(2)}$ replaced by $u$, $\bar{u}$, Lemma \ref{lem:hatf}
implies that, provided $u, \overline{u} \in \mathcal{W}$, we have $f^{\hat{u}} \in H^1(0,T;L_2(\Om))$ and the estimate 
\begin{equation*}
\norm{f^{\hat{u}}}_{L_2(0,T;L_2)} + \norm{f_t^{\hat{u}}}_{L_2(0,T;L_2)} \leq 2 \bar{c} \bar{a}   \norm{\hat{u}}_{\tilde{\mathcal{V}}}^2 \leq 2  \bar{c} \bar{a}   \norm{\hat{u}}_{\mathcal{V}}^2
\end{equation*}
holds for some $\bar{c} >0$. This by \eqref{estu1} gives us
\begin{align*}
\norm{\hat u}_\mathcal{V}^2 &\leq c_k \esssup_{t\in[0,T]}\left(\cE[\hat{u}](t) + \int_0^t  \cE[\hat{u}](\tau) + k[\hat{u}](\tau)\, d\tau \right)\\
 &\leq  c_k \hat{c}_2 \left( \cE[\hat{u}](0) + \esssup_{t\in[0,T]} \int_0^t \norm{f^{\hat{u}}(\tau) }_{L_2} +  \norm{f_t^{\hat{u}}(\tau) }_{L_2}\, d\tau \right)\\
 & \leq c_k \hat{c}_2  \cE[\hat{u}](0) + 2 c_k \hat{c}_2   \overline{c}   \bar{a}  \norm{\hat{u}}_{\tilde{\mathcal{V}}}^2 \leq c_k \hat{c}_2  \cE[\hat{u}](0) + 2 c_k \hat{c}_2   \overline{c}   \bar{a}  \norm{\hat{u}}_{\mathcal{V}}^2.
 \end{align*}
Choosing $\bar{a} < (4 c_k \hat{c}_2   \overline{c} )^{-1}$ and recalling \eqref{ce} we arrive at
\begin{equation*}
\norm{\hat{u}}_{\mathcal{V}}^2 \leq 2 c_k \hat{c}_2 c_e (\norm{u_0-\bar{u}_0}_{H^4}^2+\norm{u_1-\bar{u}_1}_{H^4}^2+ \norm{u_2-\bar{u}_2}_{H^3}^2+  \norm{u_{ttt}(0)-\bar{u}_{ttt}(0)}_{H^1}^2).
\end{equation*}
Let now $u_0 \rightarrow \bar{u}_0$ in $H^4(\Om)$,  $u_1 \rightarrow \bar{u}_1$, $u_2 \rightarrow \bar{u}_2$ in $H^3(\Om)$ and $u_{ttt}(0) \rightarrow \bar{u}_{ttt}(0)$ in $H^1(\Om)$.
Then $\norm{u - \bar{u}}_{\mathcal{V}}^2 \rightarrow 0$. Hence the solution depends continuously on the data
with respect to the $\mathcal{V}$-topology. \vspace{3mm}\newline
Therewith the proof of Theorem \ref{thm:wp} is complete. Finally, we show exponential decay of solutions. During the proof we 
will employ a classical barrier argument which has already been used in other studies of models in nonlinear acoustics, see \cite{BrKa14}, \cite{BKR14}, \cite{KaLa09}, \cite{KaLa11}, \cite{KaLa12},
\cite{KLM12} or \cite{KLP12}.

\begin{theorem}[Exponential decay]
\label{thm:decay} Suppose $u_0 \in H^4(\Om)$, $u_1, u_2 \in H^3(\Om)$ with $\|u_1\|_{L_\infty} < (2k)^{-1}$, and $u_{ttt}(0) \in H^1(\Om)$, where
\begin{equation*}
u_{ttt}(0) = (1-2k u_1)^{-1} [ (a+b) \Delta u_2 -ac^2\Delta^2 u_0 +c^2  \Delta u_1 - ab  \Delta^2 u_1 + 2k (u_2)^2 + 2 s|\nabla u_1|^2 + 2s\nabla u_2 \nabla u_0 ]
\end{equation*}
such that $u_0 |_\Gamma = \Delta u_0|_\Gamma = u_1 |_\Gamma = \Delta u_1 |_\Gamma = u_2 |_\Gamma = \Delta u_2 |_\Gamma= u_{ttt}(0)|_\Gamma =0$.\\
If the initial data satisfy the smallness condition
\begin{equation}
\label{initial:global}
 \norm{u_0}_{H^4}^2 + \norm{u_1}_{H^4}^2 + \norm{u_2}_{H^3}^2 + \norm{u_{ttt}(0)}_{H^1}^2  \leq \rho
\end{equation}
for some sufficiently small $\rho >0$, then the global solution  decays exponentially fast to zero as time tends to infinity 
 in the sense that there exists some $\omega >0$ such that
\begin{equation}
\label{edecay}
\norm{u(t)}_{H^4}^2 + \norm{u_t(t)}_{H^3}^2 + \norm{u_{tt}(t)}_{H^3}^2 + \norm{u_{ttt}}_{H^1}^2 \leq C \E^{-\om t}, \qquad t > 0.
\end{equation}
\end{theorem}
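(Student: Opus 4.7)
By Theorem \ref{thm:wp}, provided $\rho$ is taken at most $\kappa^2$, the smallness condition \eqref{initial:global} ensures the existence of a unique global solution $u \in \mathcal{V}$ with $\|u\|_{\tilde{\mathcal{V}}}^2 \leq \bar{a}$, and in particular the non-degeneracy bound $\|u_t\|_{L_\infty((0,\infty)\times\Om)} < (2k)^{-1}$ holds for all $t \geq 0$. The plan is to upgrade the energy estimate \eqref{estu1} to an exponentially weighted version and then close the decay bound by the standard barrier argument on
\begin{equation*}
\mathcal{M}(t) := \sup_{0 \leq s \leq t} \E^{\omega s} \cE[u](s), \qquad T^\ast := \sup\bigl\{\, t \geq 0 : \mathcal{M}(t) \leq \mathcal{R}\,\bigr\},
\end{equation*}
for a small rate $\omega >0$ and a small barrier level $\mathcal{R}>0$ to be specified; continuity of $t \mapsto \mathcal{M}(t)$ and smallness of $\cE[u](0)\leq c_e \rho$ (inequality \eqref{ce}) ensure $T^\ast>0$.

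The first technical step is to establish a weighted analogue of \eqref{estu1}, namely
\begin{equation*}
\E^{\omega t} \cE[u](t) + \tilde{b}_\omega \int_0^t \E^{\omega \tau}\bigl(\cE[u](\tau) + k[u](\tau)\bigr)\, d\tau \leq \cE[u](0) + \hat{c}_2 \int_0^t \E^{\omega \tau}\bigl(\|f(\tau)\|_{L_2}^2 + \|f_t(\tau)\|_{L_2}^2\bigr)\, d\tau
\end{equation*}
for every $\omega \in (0,\omega_0)$ with $\omega_0>0$ determined by the dissipation constants $a,b,c$. This is obtained by retracing the derivation of Lemma \ref{lem:est1} and the heat-equation identity \eqref{id:heat1} with an extra factor $\E^{\omega \tau}$ inside every time integral; the new term $\omega \int_0^t \E^{\omega \tau} \cE[u](\tau)\, d\tau$ produced by integration by parts is absorbed into the dissipative integral on the left provided $\omega < \omega_0$.

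Next I would bound the weighted nonlinear integral. Expanding $f = 2k(u_{tt})^2 + 2k u_t u_{ttt} + 2s|\nabla u_t|^2 + 2s\nabla u \cdot \nabla u_{tt}$ and $f_t$ as in the proof of Lemma \ref{lem:estf}, every summand is a product of a lower-order factor placed in $L_\infty(\Om)$ via $H^3 \hookrightarrow L_\infty$ or $H^2 \hookrightarrow L_\infty$ (and hence controlled by $\cE[u](\tau)$) and a higher-order factor placed in $L_2(\Om)$ (absorbed into $\cE[u](\tau)$ or, when the factor is $u_{tttt}$, into $k[u](\tau)$). The resulting pointwise bound $\|f(\tau)\|_{L_2}^2 + \|f_t(\tau)\|_{L_2}^2 \leq C\,\cE[u](\tau)\bigl(\cE[u](\tau)+k[u](\tau)\bigr)$, combined with $\cE[u](\tau) \leq \mathcal{M}(t)\E^{-\omega \tau} \leq \mathcal{R}\E^{-\omega \tau}$ for $\tau \leq t \leq T^\ast$, yields
\begin{equation*}
\int_0^t \E^{\omega \tau}\bigl(\|f(\tau)\|_{L_2}^2 + \|f_t(\tau)\|_{L_2}^2\bigr)\, d\tau \leq C\mathcal{R} \int_0^t \bigl(\cE[u](\tau) + k[u](\tau)\bigr)\, d\tau \leq C\mathcal{R} \int_0^t \E^{\omega \tau}\bigl(\cE[u](\tau) + k[u](\tau)\bigr)\, d\tau,
\end{equation*}
where the last step uses $\E^{\omega \tau} \geq 1$. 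Choosing $\mathcal{R}$ small enough that $\hat{c}_2 C \mathcal{R} \leq \tilde{b}_\omega/2$ absorbs this into the left-hand side of the weighted energy inequality.

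Combining the two previous displays gives $\E^{\omega t} \cE[u](t) \leq 2 c_e \rho$ on $[0,T^\ast]$, hence $\mathcal{M}(T^\ast) \leq 2 c_e \rho$; if one chooses $\rho$ so small that $2c_e\rho<\mathcal{R}/2$, the supposition $T^\ast<\infty$ forces $\mathcal{M}(T^\ast)=\mathcal{R}$, a contradiction. Thus $T^\ast=+\infty$, so $\E^{\omega t}\cE[u](t)\leq \mathcal{R}$ for all $t\geq 0$. Unpacking the definition of $\cE[u]$, using the identification $w = a\Delta u - u_t$ together with Poincar\'e's inequality to recover $\|u(t)\|_{H^4}^2$ and $\|u_t(t)\|_{H^3}^2$ from $\|w(t)\|_{H^2}^2$ and the high-order $\cE[u]$-terms, then yields the claimed decay \eqref{edecay}. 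The main obstacle is controlling the $f_t$-contributions involving $u_t u_{tttt}$ and $\nabla u \cdot \nabla u_{ttt}$: the high-order factors are only $L_2$ in space and the first is moreover only in $L_2$ in time, so one must place the low-order factor ($u_t$ or $\nabla u$, which sits in $L_\infty(0,\infty;L_\infty(\Om))$ via $H^3 \hookrightarrow L_\infty$ and decays exponentially by the barrier) in $L_\infty(\Om)$, so that the remaining $L_2$-in-time integrability is absorbed into the weighted dissipative integral of $\cE[u]+k[u]$.
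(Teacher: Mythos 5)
Your argument is correct in outline but follows a genuinely different route from the paper. The paper never introduces exponential weights: it first proves the purely nonlinear estimate $\cE[u](T)+\int_0^T(\cE[u]+k[u])\,dt\leq\hat{c}\,(\cE[u](0)+\esssup_s\cE[u](s)\int_0^T k[u]\,dt)$ (Lemma \ref{lem:energybck}), runs the barrier argument on the \emph{unweighted} energy to get the uniform bound $\cE[u](T)\leq 2\max\{1,\hat c\}\eta$ and hence the absorbed inequality $\cE[u](T)+\tfrac12\int_0^T(\cE[u]+k[u])\leq\hat c\,\cE[u](0)$, and only then extracts exponential decay from the resulting space--time integrability via the standard lemma ``$E(t)+\sigma\int_s^tE\leq CE(s)$ for all $s\leq t$ implies $E(t)\leq Ce^{-\omega(t-s)}E(s)$.'' You instead build the weight $\E^{\omega\tau}$ into the energy inequality from the start and bootstrap on $\mathcal{M}(t)=\sup_{s\leq t}\E^{\omega s}\cE[u](s)$. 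Both hinge on the same quadratic structure $\|f\|_{L_2}^2+\|f_t\|_{L_2}^2\lesssim\cE[u](\cE[u]+k[u])$, which you verify correctly term by term (in particular placing $u_t$ in $L_\infty$ against $u_{tttt}\in L_2$, which is exactly how the paper handles that term). What your route buys is an explicit rate $\omega$ tied to the dissipation constants and no need for the final ODE lemma; what it costs is that your ``first technical step'' --- the weighted analogue of \eqref{est1} and \eqref{id:heat1} --- is asserted rather than proved, and since Lemma \ref{lem:est1} is itself only cited from \cite{BrKa14}, retracing its multiplier argument with the weight (and absorbing the resulting $\omega$-proportional terms into the dissipative integrals via Poincar\'e) is real additional work that a complete write-up would have to carry out. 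The paper's choice to keep the weights out lets it reuse \eqref{estu1} verbatim, at the price of leaving the final decay step to a one-line ``standard argument.''
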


\begin{lemma}\label{lem:energybck}
The estimate
\begin{equation}\label{energyestimate:final}
\cE[u](T) + \int_0^T  \cE[u](t)+k[u](t)\, dt
\leq \hat{c} \left(\cE[u](0)+\esssup_{s\in[0,T]} \cE[u](s)\int_0^T k[u](t)\, dt\right)
\end{equation}
holds with $\hat{c}>0$ sufficiently large.
\end{lemma}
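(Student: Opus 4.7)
The strategy is to rewrite the Blackstock--Crighton--Kuznetsov equation as $D_w D_h u = -f$ with $f = (k(u_t)^2 + s|\nabla u|^2)_{tt}$ treated as a forcing term, and to apply the linear energy estimate \eqref{estu1}. Since the operators $D_w$ and $D_h$ are constant-coefficient and identical to those analyzed in Section \ref{sec:energy}, \eqref{estu1} gives
\begin{equation*}
\cE[u](T) + \int_0^T \cE[u](t) + k[u](t)\, dt \leq \hat{c}_2 \left(\cE[u](0) + \int_0^T \hnorm{f(t)}^2 + \hnorm{f_t(t)}^2\, dt\right),
\end{equation*}
so the proof reduces to bounding the forcing integral by a constant multiple of $\esssup_{s\in[0,T]} \cE[u](s) \cdot \int_0^T k[u](t)\, dt$.

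To this end, I expand
\begin{align*}
f &= 2k(u_{tt})^2 + 2k u_t u_{ttt} + 2s|\nabla u_t|^2 + 2s\nabla u \cdot \nabla u_{tt}, \\
f_t &= 6k u_{tt} u_{ttt} + 2k u_t u_{tttt} + 6s \nabla u_t \cdot \nabla u_{tt} + 2s \nabla u \cdot \nabla u_{ttt},
\end{align*}
and estimate each summand via H\"older's inequality exactly as in the proof of Lemma \ref{lem:estf}, now balancing the factors so that one sits in $L_\infty(0,T;L_\infty(\Om))$ (its norm bounded by $\esssup_s \cE[u](s)$) and the other in $L_2(0,T;L_2(\Om))$ (its norm bounded by $\int_0^T k[u](t)\, dt$). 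The key ingredients are the embeddings $H^2(\Om) \hookrightarrow L_\infty(\Om)$ (valid for $n\leq 3$) and $H^3(\Om) \hookrightarrow L_\infty(\Om)$, together with the Poincar\'e inequalities \eqref{Poincare} that dominate low-order Sobolev norms by the higher-order norms encoded in $k[u]$. As a model computation,
\begin{align*}
\int_0^T \norm{(u_{tt})^2}_{L_2}^2\, dt
&\leq \norm{u_{tt}}_{L_\infty(0,T;L_\infty)}^2 \int_0^T \norm{u_{tt}}_{L_2}^2\, dt \\
&\leq C\, \esssup_{s\in[0,T]} \cE[u](s) \int_0^T k[u](t)\, dt,
\end{align*}
where $\norm{u_{tt}(t)}_{L_\infty}^2 \leq C \hnorm{\A u_{tt}(t)}^2 \leq C \cE[u](t)$ by \eqref{linfDA} and $\norm{u_{tt}(t)}_{L_2}^2 \leq C \hnorm{\A^{3/2} u_{tt}(t)}^2 \leq C k[u](t)$ by \eqref{Poincare}.

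Each of the remaining seven products in $f$ and $f_t$ admits an analogous decomposition: for $u_t u_{ttt}$ and $u_t u_{tttt}$ one puts $u_t \in L_\infty(0,T;H^3(\Om)) \hookrightarrow L_\infty(0,T;L_\infty(\Om))$ into the sup factor against $u_{ttt}$, respectively $u_{tttt}$, whose $L_2(0,T;L_2(\Om))$-norm is dominated by $\int_0^T k[u](t)\, dt$; for $u_{tt} u_{ttt}$ one sends $u_{tt}$ into the sup factor via $L_\infty(0,T;H^2(\Om)) \hookrightarrow L_\infty(0,T;L_\infty(\Om))$ and $u_{ttt}$ into the $L_2$ factor; the four gradient products are handled identically, using that $\nabla u, \nabla u_t$ lie in $L_\infty(0,T;H^2(\Om)) \hookrightarrow L_\infty(0,T;L_\infty(\Om))$ while $\nabla u_{tt}$ and $\nabla u_{ttt}$ reduce to $k[u]$ by Poincar\'e. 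Summing the eight contributions yields the desired forcing bound and hence \eqref{energyestimate:final} with $\hat c$ a constant multiple of $\hat c_2$ depending only on the embedding and Poincar\'e constants.

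The principal obstacle is essentially combinatorial bookkeeping: one must verify for each of the eight summands that the natural H\"older split produces a factor controlled by $\cE$ in $L_\infty(0,T;L_\infty(\Om))$ and a factor controlled by $k$ in $L_2(0,T;L_2(\Om))$. The subtle cases are the summand involving $u_{tttt}$ in $f_t$, where the only viable choice is to place $u_t$ in the sup factor, and $u_t u_{ttt}$ in $f$, for which putting $u_t$ in the sup factor (rather than $u_{ttt}$) is what aligns $\int_0^T \norm{u_{ttt}}_{L_2}^2\, dt$ with $\int_0^T k[u](t)\, dt$ rather than with $\int_0^T \cE[u](t)\, dt$. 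Once this matching is verified throughout, \eqref{energyestimate:final} follows.
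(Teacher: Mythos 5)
Your proposal is correct and follows essentially the same route as the paper: the paper's own proof simply invokes the linear estimate \eqref{estu1} and refers to "inspection of the proof of Lemma \ref{lem:estf}" with $\varphi=u$, i.e.\ exactly the H\"older splitting of each summand of $f$ and $f_t$ into an $L_\infty(0,T;L_\infty(\Om))$ factor controlled by $\esssup_s\cE[u](s)$ and an $L_2(0,T;L_2(\Om))$ factor controlled by $\int_0^T k[u]\,dt$ that you carry out. Your verification of the matching for all eight products (in particular placing $u_t$ in the sup factor against $u_{tttt}$) is consistent with the definitions of $\cE$ and $k$, so no gap remains.
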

\begin{proof}
Based on the energy estimate for the linearized equation \eqref{estu1}, 
the result follows from inspection of the proof of Lemma \ref{lem:estf} (choose $\varphi = u$ and use \eqref{est:contr}).
\end{proof}

\begin{proof}[Proof of Theorem \ref{thm:decay}] First we show that for all initial values satisfying 
\begin{equation}\label{ekappa}
\cE[u](0)\leq \eta
\end{equation}
with $\eta$ sufficiently small, 
\begin{equation}\label{kappa}
\eta \leq (4 \hat{c} \max\{1,\hat{c}\})^{-1},
\end{equation}
we get that for all $T>0$ 
\begin{equation}\label{est:e}
\cE[u](T)\leq 2\max\{1,\hat{c}\}\eta.
\end{equation}
We prove the claim by contradiction. To this end, we assume that there exists a finite time 
such that \eqref{est:e} is violated. We suppose that $T_0$ the minimal such time 
(observe that $T_0>0$ since $\cE[u](0)< 2\max\{1,\hat{c}\}\eta$) and have
\begin{equation}\label{ass:fail} 
\cE[u](T_0)\geq 2\max\{1,\hat{c}\}\eta.
\end{equation} 
Moreover, \eqref{est:e} holds for all $T\in(0,T_0)$. From Lemma \ref{lem:energybck} we get 
\begin{equation}\label{est:global}
\cE[u](T)+\int_0^T \cE[u](t)+k[u](t) \, dt \leq \hat{c} \left(\cE[u](0)+2\max\{1,\hat{c}\}\eta \int_0^T k[u](t)dt\right)
\end{equation}
for all $T\in(0,T_0)$ which, by $\cE[u](0)\leq\eta $ and $4 \hat{c} \max\{1,\hat{c}\}\eta \leq1$, gives
\begin{equation}\label{est:decay1}
\cE[u](T) + \tfrac{1}{2} \int_0^T \cE[u](t)+ k[u](t)  \, dt \leq \hat{c} \, \cE[u](0).
\end{equation}
Thus we have $\cE[u](T)\leq \hat{c} \eta$ for all $T\in(0,T_0)$ and hence by continuity $\cE[u](T_0)\leq \hat{c}\eta$
which is a contradiction to \eqref{ass:fail}. 
This proves that the bound \eqref{est:e} holds for all $T>0$ provided \eqref{ekappa} holds with \eqref{kappa}.\newline
For the condition of the initial values \eqref{initial:global} note that we recall \eqref{ce}. Hence, \eqref{initial:global} with 
$\rho = \frac{\eta}{c_e}$ ensures \eqref{ekappa}. 
By application of Poincar{\'e}'s inequality to $k[u]$ and use of \eqref{ce} we get 
\begin{align*}
&\int_0^t \norm{u(\tau)}_{H^4}^2 +  \norm{u_t(\tau)}_{H^3}^2 + \norm{u_{tt}(\tau)}_{H^3}^2 + \norm{u_{ttt}(\tau)}_{H^1}^2 \, d\tau \\
& \quad \leq c_d (\norm{u(0)}_{H^4}^2 +  \norm{u_t(0)}_{H^3}^2 + \norm{u_{tt}(0)}_{H^3}^2 + \norm{u_{ttt}(0)}_{H^1}^2)
\end{align*}
for some constant $c_d$. This by a standard argument leads to \eqref{edecay} with  $\om ={c_d}^{-1}$.
\end{proof}

\begin{remark} Again, we mention that $\|u_1\|_{L_\infty} < {(2k)^{-1}}$ can be achieved by $\|u_1\|_{H^3}^2 \leq \rho $ for 
$\rho$ sufficiently small and the embedding $H^3(\Om) \hookrightarrow L_\infty(\Om)$.
\end{remark}

\section{Conclusions and outlook}
Based on suitable energy estimates for the linearized equation we proved existence of a unique solution of \eqref{IBVP:Dir:hom} 
depending continously on the sufficiently small initial data by means of a fixed-point argument. Moreover, this solution decays exponentially fast to zero 
(with respect to the corresponding norms) as time tends to infinity.

Considering equation \eqref{IBVP:Dir:hom} together with application relevant boundary conditions (e.g., inhomogeneous 
Neumann boundary conditions for modeling excitation or absorbing boundary conditions for modeling boundary dissipation) 
will be subject of further research. Moreover, there will be a follow-up paper (based on the concept of maximal $L_p$-regularity) which addresses the issue of optimal regularity for the Blackstock--Crighton equation in $L_p$-spaces.

\bigskip
\noindent
{\bf Acknowledgments.}
The author wishes to thank Barbara Kaltenbacher for many helpful discussions and fruitful comments on a first 
version of this manuscript and the referees for lots of suggestions which have been very helpful for the improvement this work. Furthermore, the support by the Austrian Science Fund (FWF): P24970
and the Karl Popper Kolleg ``Modeling-Simulation-Optimization'' funded by the Alpen-Adria-Universit\"at Klagenfurt and by the Carinthian Economic Promotion Fund (KWF) is acknowledged. 
\bigskip

\bibliography{BlaCriKuz_lit}
\bibliographystyle{plain}

\end{document}